\DeclarePairedDelimiterX{\inp}[2]{\langle}{\rangle}{#1, #2}
\theoremstyle{plain}
\crefname{assumption}{Assump.}{Assumption}
\newtheorem{definition}{Definition}
\crefname{definition}{Definition}{Definition}
\newtheorem{theorem}{Theorem}
\crefname{theorem}{Theorem}{Theorems}
\crefname{conjecture}{Conjecture}{Conjectures}
\newtheorem{remark}{Remark}
\crefname{remark}{Remark}{Remarks}
\crefname{lemma}{Lemma}{Lemmas}
\newtheorem{proposition}{Proposition}
\crefname{proposition}{Proposition}{Propositions}
\newtheorem{corollary}{Corollary}
\crefname{corollary}{Corollary}{Corollaries}
\crefname{example}{Example}{Examples}
\crefname{claim}{Theorem}{Claim}
\def\@email#1#2{%
 \endgroup
 \patchcmd{\titleblock@produce}
  {\frontmatter@RRAPformat}
  {\frontmatter@RRAPformat{\produce@RRAP{*#1\href{mailto:#2}{#2}}}\frontmatter@RRAPformat}
  {}{}
}%
\begin{document}

\preprint{AIP/123-QED}

\title[A Minimal Set of Koopman Eigenfunctions]{A Minimal Set of Koopman Eigenfunctions\\Analysis and Numerics}
\author{I. Cohen} \email{ido.coh@gmail.com.}
 \affiliation{ 
Department of Mathematics, Technion
}%
\author{E. Appleboim}\email{eliap@ee.technion.ac.il.}
\affiliation{ 
Electrical and Computer Engineering Department, Technion
}%


\date{\today}

\begin{abstract}
Research on Koopman operator theory has focused on three key areas for several decades: the mathematical structure of the Koopman eigenfunction space, the basis of this space, and the ability to represent nonlinear dynamics as linear. This study provides a thorough and comprehensive framework for these topics, including theoretical, analytical, and numerical approaches. A novel mathematical structure is introduced, which outlines permissible actions on the infinite set of Koopman Eigenfunction, under which this set is closed. Notions of generating and independent sets of Koopman eigenfunctions are defined. In addition, notions of a minimal generating set, and a maximal independent set are defined and are shown to be equivalent. This structure defines conditions for independence within the set of Koopman eigenfunctions. This independent set can be interpreted as a new coordinate system in which the dynamical system is linear. The theory also highlights the equivalence of a minimal set, flowbox representation, and conservation laws. Finally, the presented theory is supported by numerical experiments.
\end{abstract}

\maketitle

\begin{quotation}
Given an $N$ dimensional dynamical system, the conclusions from this work are as follows.
\begin{description}
    \item[Cardinality] There are \emph{only} $N$ independent solutions for Koopman PDE that generate the space of these solutions termed as a \emph{minimal set}.  
    \item[System Reconstruction] From them, the governing laws are revealed as well as the conservation laws.
    \item[Equivalency] Flowbox representation, conservation laws, and a minimal set are found to be equivalent to each other.
    \item[Precise and Global Linearity] A minimal set is a coordinate system in which this dynamic is linear.
\end{description}
\end{quotation}

\section{Introduction}
The Koopman spectrum is a commonly used tool for the analysis of dynamical systems. Treating Koopman Eigenfunction space as an infinite dimensional space yields various techniques to represent the system as a linear one with truncated dimensionality \cite{schmid2010dynamic,brunton2016sparse,kaiser2018discovering,servadio2023koopman,schmid2022dynamic}. Naturally, these methods occasionally result in a redundant and overly large decomposition \cite{mezic2005spectral,williams2015data}, which can sometimes be inaccurate \cite{cohen2021modes,cohen_gilboa_2023}. Thus, the challenge of extracting meaningful information about the dynamics from samples remains open \cite{avila2020data}. In this study, we present analytical and numerical frameworks to identify the minimal set of Koopman Eigenfunctions required to perfectly recover the dynamic from samples. 

Regrettably, the mathematical framework of the Koopman spectrum has received little attention, resulting in a limited understanding of how to efficiently extract a representation based on the underlying geometry of this space from samples \cite{bollt2021geometric}. Due to this lack of knowledge, unsophisticated and exhaustive algorithms \cite{brunton2022data,williams2015data,li2017extended} have been employed, but their assumptions have led to intrinsic flaws in dynamical representation and prediction, such as in highly nonlinear time-variant systems \cite{turjeman2022underlying}, homogeneous flows \cite{cohen2021modes}, or even linear systems with non-zero inputs \cite{lu2021extended}. This study aims to bridge this knowledge gap by presenting a comprehensive theory on the mathematical structure of the set of Koopman eigenfunctions.

\subsection{Main contributions}
The contribution of this work is discussed in three phases, theoretical, analytical, and numerical parts. 
\paragraph{\bf{Theoretic Part}} 
A novel viewpoint on the set of Koopmann eigenfunctions is suggested. This viewpoint enables the definition of actions on this set under which the set is closed. It also enables the definitions of an independent set and of a generating set. Using these definitions it is possible to prove that a maximal independent set (appropriately defined) is a minimal generating set (defined properly).

\paragraph{\bf{Analytic Part}} The notion of a minimal generating set induces a new representation of the dynamic. It is shown that under some specific conditions, a minimal set can serve as a coordinate system with respect to which the dynamic becomes linear. In addition, it is shown that there is a transformation from a flowboxed coordinate system to a minimal generating set.

\paragraph{\bf{Numeric Part}} Given a vector field of a system, a numeric method to find a minimal generating set is presented. Based on geometric constraints of independence set and constant velocity restrictions, a functional is formulated where a minimal set is its minimizer. This functional is given by a loss function fed into a Neural Network training process.  

\paragraph{\bf{Experiments}}
The theoretic, analytic, and numeric part are backed-up with experiments. From the numeric results, theory restrictions, analytic solutions, and numeric solutions are perfectly compatible. In addition, we thoroughly discuss the importance of density and diversity of samples in system recovery.

\section{Setup and motivation}
\subsection{Notations and definitions}
\begin{description}
    \item[Dynamic]  Let us consider the following nonlinear dynamical system, defined in a domain $\mathcal{D}$ in $\mathbb{R}^n$
\begin{equation}\label{eq:dynamics}
    \dot{\bm{x}}=P(\bm{x}), \,\, t\in I=[0,T]
\end{equation}
where $\bm{x}\in \mathbb{R}^N$, the operator $\,\dot{\,\,}\,$ denotes the time derivative, and $P:\mathbb{R}^N\to \mathbb{R}^N$. All along this work it is assumed that $P$ is $C^1$ and therefore $\bm{x}(t)\in C^2$.

\item[Orbit of an initial point] Given an initial condition, $\bm{x}(t=0)= \bm{x}_0$, the unique solution of \eqref{eq:dynamics} can be seen as a curve in $\mathbb{R}^N$. This trajectory is denoted by $\mathcal{X}(\bm{x}_0)$, and termed as the \emph{orbit of $\bm{x}_0$}.

\item[Equilibrium] An equilibrium point, denoted by $\bm{x}^*\in \mathcal{D}$, is a stationary point of Eq.  \eqref{eq:dynamics}, i.e. a point at which
\begin{equation}
    P(\bm{x}^*)=\bm{0}
\end{equation}
where $\bm{0}$ is the $N$ dimensional zero vector.

\item[Measurement] A measurement is a function from $\mathcal{D}$ to $\mathbb{C}$. 

\item[Koopman Operator] The Koopman operator $K_\tau$ acts on the infinite dimensional vector space of measurements and admits the following.  Let $g(\bm{x})$ be a measurement then
\begin{equation}\label{eq:koopDef}\tag{\bf{KO}}
    K_\tau(g(\bm{x}(s)))=g(\bm{x}(s+\tau)),\quad s,s+\tau\in I,
\end{equation}
where $\tau>0$. This operator is linear \cite{koopman1931hamiltonian,mezic2005spectral}.
\item[Koopman \acs{PDE}] Let $\Phi(\bm{x})$ be some differentiable function from $\mathcal{D}$ to $\mathbb{C}$. Then $\Phi$ is a solution of the Koopman \ac{PDE} if it satisfies the following, everywhere in $\mathcal{D}$,
\begin{equation}\label{eq:KEFPDE}
    \nabla \Phi (\bm{x})^T P(\bm{x}) = \lambda \Phi(\bm{x}), \quad \forall \bm{x}\in \mathcal{D}. 
\end{equation}
where $\nabla$ denotes the gradient of $\Phi$ with respect to the state vector $\bm{x}$. In particular, it is assumed that $\Phi$ is $C^1$ as a function of $\bm{x}$. 

\item[Koopman Manifold] Under the assumptions of differentiable dynamics the graph of a solution of Eq. \eqref{eq:KEFPDE} can be regarded as a manifold in $\mathbb{R}^{N}\times \mathbb{C}$. We term this manifold as \emph{Koopman Manifold}. 

\item[\acl{KEF}] Assuming the initial condition $\bm{x}_0$, a measurement $\varphi(\bm{x})$, satisfying the following relation along the orbit $\mathcal{X}(\bm{x}_0)$
\begin{equation}\label{eq:KEFdiff}
    \dfrac{d\varphi (\bm{x})}{dt}=\lambda \varphi(\bm{x}), \quad \forall \bm{x}\in \mathcal{X}(\bm{x}_0)
\end{equation}
for some value $\lambda\in \mathbb{C}$, is a \acf{KEF}.
According to the existence and uniqueness theorem, $\varphi(\bm{x})$ is given along the orbit of $x_0$ by:
\begin{equation}\label{eq:KEFform}
    \varphi(\bm{x}(t)) = \varphi(\bm{x}_0)e^{\lambda t}
\end{equation}
Existence of Koopman Eigenfunction is thoroughly discussed in \cite{cohen_gilboa_2023}. 
Koopman eigen functions and solutions of the Koopman \ac{PDE} are related by the following way:
Let $\Phi(\bm{x})$ be a solution of Koopman PDE, and let $\mathcal{X}(\bm{x}_0)$ be an orbit of the dynamic initiated at $\bm{x}_0$, and contained in $\mathcal{D}$. Then, one can derive a Koopman eigenfunction $\varphi(\bm{x})$ from $\Phi(\bm{x})$ simply by
\begin{equation}
    \varphi(\bm{x}) = \Phi(\bm{x}), \quad \forall \bm{x}\in \mathcal{X}(\bm{x}_0).
\end{equation}
That is, $\varphi$ is the restriction of $\Phi$ along the orbit of $x_0$. Note however, that a \ac{KEF} need not necessarily be a restriction of a solution of the Koopman PDE. An example of a Koopman eigenfunction that is not such a restriction is given in \cref{appsec:KEF_KM}. 
\end{description}

\subsection{Mathematical structure induced on the set of \acp{KEF}}
\paragraph{\bf{Multiplicity of Koopman Eigenfunctions}}
The set of Koopman eigenfunctions is closed under a variety of operations. For instance, given a \ac{KEF} $\varphi$ that is everywhere non-zero, with some eigenvalue $\lambda \ne 0$, one can generate a \ac{KEF} from $\varphi$ with any other eigenvalue, since $(\varphi)^\beta$ is also a \ac{KEF} for all $\beta$. In addition, if $\varphi_1(\bm{x})$ and $\varphi_2(\bm{x})$ are \acp{KEF} then $\varphi_3(\bm{x})=\varphi_1(\bm{x})\varphi_2(\bm{x})$ is also one. Hence, it is closed under this operation. Furthermore, point-wise multiplication of \acp{KEF} is associative, and the constant function $1$ serves as a unit element with respect to this operation (it is an eigenfunction of $\lambda = 0$), and for every eigenfunction, $e^{\lambda t}$ the eigenfunction $e^{-\lambda t}$ is the inverse element. As a result the set of \acp{KEF}s has an abelian group structure with respect to point-wise multiplication. In fact, this set of is closed also under other types of point-wise operations, yet no full characterization of these operations had been made.
\paragraph{\bf{Algebraic-differential structure}}
In the sequel, a definition of an algebraic-differential structure of the set of \ac{KEF}s is suggested. The newly suggested structure calls for a new point of view on the set of Koopman eigenfunctions that is different from the common viewpoint which studies the spectrum of the Koopman operator. 
Thus, the focus is moved from the Koopman spectrum to the ability to generate the set of solutions of the Koopman PDE in some appropriate sense that will be defined in Section $5$. Special interest will be taken in small generating sets, minimal such sets in particular. The main merit of this result is that a minimal generating set can serve as a new coordinate system in which the dynamic is linear, and the transformation from the coordinate system to the linear one is invertible.

\section{Set Up: Unit manifolds and Time Mappings }

\begin{definition}[Unit velocity measurement]\label{def:unitVelocity}
A unit velocity measurement is a smooth function $M:\mathcal{D}\subset \mathbb{R}^N \to \mathbb{C}$  satisfying the following PDE \footnote{This function is denoted with $M$ as a shortage for $\mu o v \acute\alpha \delta \alpha$ unit in Greek.},
\begin{equation}\label{eq:unitPDE}
    \nabla M(\bm{x})^T P(\bm{x})=1,\quad \forall \bm{x}\in \mathcal{D}.
\end{equation}
\end{definition}

\begin{definition}[Unit manifold]\label{def:unitManifold}
A unit manifold is the graph of a unit velocity measurement. 
\end{definition}
Given a solution of Eq. \eqref{eq:dynamics} $\bm{x}(t)$, for some initial condition $\bm{x}_0$, with the orbit $\mathcal{X}(\bm{x}_0)\subset \mathcal{D}$, the time derivative of the measurement $M(\bm{x})$ is 
\begin{equation}
    \frac{d}{dt}M(\bm{x})=\nabla M(\bm{x})^T P(\bm{x})=1, 
\end{equation}
which is the source of its name.

\paragraph{\bf{From solutions of the Koopman PDE to unit velocity measurements and back.}} Let $\Phi$ be a function satisfying Eq. \eqref{eq:KEFPDE} for an eigenvalue $\lambda\ne 0$, so that $\Phi(\bm{x})\ne 0,\, \forall \bm{x}\in \mathcal{D}$, then 
\begin{equation}\label{eq:inducedUnitManifold}
    M(\bm{x})=\frac{1}{\lambda}\ln(\Phi(\bm{x}))
\end{equation}
is a unit velocity measurement. Namely, $M(\bm{x})$ admits Eq. \eqref{eq:unitPDE}
\begin{equation}
    \nabla M(\bm{x})^T P(\bm{x})=\frac{\nabla \Phi(\bm{x})^T P(\bm{x})}{\lambda\Phi(\bm{x})}= \frac{\lambda\Phi(\bm{x})}{\lambda\Phi(\bm{x})}=1.
\end{equation}
In order for this to be well defined, a specific branch should be chosen for the logarithmic function, as a function of the complex variable $\Phi(\bm{x})$.
Under the conditions that $\lambda \ne 0$ and that $\Phi(\bm{x})\ne 0, \, \forall \bm{x}\in \mathcal{D}$, a solution for Eq. \eqref{eq:KEFPDE} entails a solution for Eq. \eqref{eq:unitPDE} and vice versa. 

Following Eq. \eqref{eq:inducedUnitManifold}, the gradients of $M(\bm{x})$ and $\Phi(\bm{x})$ satisfy the following:
\begin{equation}\label{eq:UnitKEF_Mani_Rel}
    \nabla \Phi(\bm{x}) = \lambda \Phi(\bm{x}) \nabla M(\bm{x}), \quad \forall \bm{x}\in \mathcal{D}.
\end{equation}

\begin{definition}[Time mappings]\label{def:timeMapping}
Let $\bm{x}(t)$ be a solution of the dynamical system initiated at $\bm{x}(t=0) = \bm{x}_0$ for $t\in I$. Given that the trajectory $\bm{x}(t)$ is regular and has no self intersections, the time mapping is defined as the inverse function from the state in the orbit $\mathcal{X}(\bm{x}_0)$ to the corresponding time point in the interval $I$, more formally, $\mu:\mathcal{X}(\bm{x}_0) \to I$ such that
\begin{equation}\label{eq:timeMapping}
    \mu(\bm{x}(t)) = t, \quad \forall \bm{x}\in \mathcal{X}(\bm{x}_0).
\end{equation}
\end{definition}

\paragraph{\bf{From unit velocity measurments to time mappings and back.}} If the subset $\mathcal{X}(\bm{x}_0)\subset\mathcal{D}$, 
then a time mapping is related to the unit velocity measurement $M(\bm{x})$ in $\mathcal{X}(\bm{x}_0)$ as
\begin{equation}
    \mu(\bm{x};\bm{x}_0) = M(\bm{x})-M(\bm{x}_0), \quad \forall \bm{x}\in \mathcal{X}(\bm{x}_0).
\end{equation}
This expression is a time mapping since the time derivative of $\mu(\bm{x};\bm{x}_0)$ is one and $\mu(\bm{x}_0;\bm{x}_0)=0$. Following the existence and uniqueness theorem $\mu(\bm{x};\bm{x}_0)$ admit Eq. \eqref{eq:timeMapping}.

\paragraph{\bf{From Koopman eigenfunctions to time mappings and back.}} Let $\varphi$ be a Koopman eigenfunction defined on $\mathcal{X}(\bm{x}_0)$. Assuming $\lambda\ne 0$, and choosing a branch for the complex logarithmic function, a time mapping, $\mu_{\varphi}(\bm{x};\bm{x}_0)$, induced from $\varphi$ is given by 
\begin{equation}\label{eq:inducedTM}
    \mu_{\varphi}(\bm{x};\bm{x}_0) = \frac{1}{\lambda}\left[\ln(\varphi(\bm{x}))-\ln(\varphi(\bm{x}_0))\right], \quad \forall \bm{x}\in \mathcal{X}(\bm{x}_0).
\end{equation}
This function is a time mapping since the time derivative of $\mu_{\varphi}(\bm{x};\bm{x}_0)$ is one and $\mu_{\varphi}(\bm{x}_0;\bm{x}_0)=0$.

\subsection{Koopman Conjugation}
Since projecting solutions of the Koopman PDE onto unit velocity measurements is not $1:1$ we will define the following conjugacy relation between different solutions of Koopman PDE.
\begin{definition}[Koopman PDE Conjugation]\label{def:KEFConjugationIdo}
We will say that two solutions of Koopman PDE. $\Phi_1(x), \Phi_2(x)$ associated with the eigenvalues $\lambda_1,\lambda_2$ are {\emph conjugate} if both are mapped via the complex logarithm function to the same unit velocity measurement $M(\bm{x})$ up to a constant for all $\bm{x}\in\mathcal{D}$
\begin{equation}\label{eq:conjugation}
\begin{split}
    \frac{1}{\lambda_1}\ln(\Phi_1(\bm{x})) &= \frac{1}{\lambda_2}\ln(\Phi_2(\bm{x})) +c
\end{split}.
\end{equation}
\end{definition} 
It is evident that the conjugacy relation just defined is an equivalence relation on the set of all solutions of the Koopman PDE. In addition, each two representatives of the same conjugacy class differ from each other by power and a coefficient.

Restricting Eq. \eqref{eq:conjugation} to an orbit yields the following conjugacy relation.
\begin{remark}[Koopman Eigenfunction Conjugation]
The same is true for the map from the set of Koopman eigenfunctions to the set of time mappings.
\end{remark}

To recap, clear relations are shown between unit velocity measurement and solutions of Koopman PDE and between \ac{KEF}s and time mappings, summarized in Fig. \ref{fig:relationSummary}. Going the other way around, above each unit velocity measurement obtained by Eq. \eqref{eq:inducedUnitManifold} there exists a fiber composed of infinitely many solutions of the Koopman PDE that are in the same conjugacy class. Thus, the given unit velocity measurement can be lifted to one of these solutions arbitrarily chosen.

\begin{figure}[phtb!]
    \centering 
    \includegraphics[trim=0 150 0 100, clip,width=0.5\textwidth,valign = t]{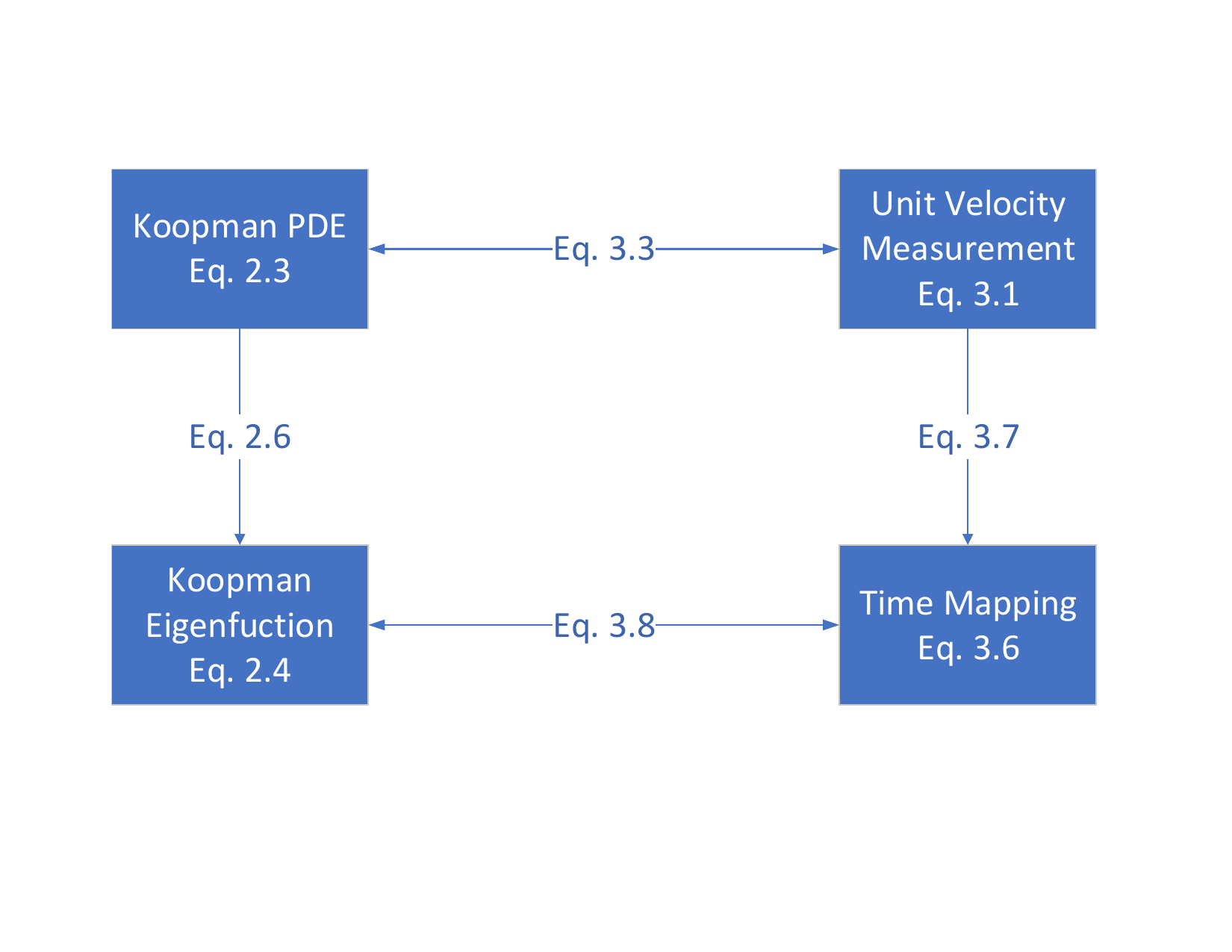}
    \caption{Relation Summary -- Koopman PDE, Koopman Eigenfunction, Unit velocity measurement, and time mapping.}
    \label{fig:relationSummary}
\end{figure}

\section{Algebraic-Differential structure on the set of \ac{KEF}s}
In this section a new mathematical structure on the set of unit velocity measurements and on the set of \acp{KEF}s is defined. This structure will further induce a structure on the set of solutions of the Koopman PDE.
The first step is to rigorously define the set of operations under which the infinite sets of \acp{KEF} and solutions of Koopman PDE are closed. In order to do so, time mappings and unit velocity measurements, defined above, will serve as a bypass. Describing the set of allowed operations under which the set of time mappings is closed is actually straightforward and is given by the following.

\begin{definition}[Admissible shift of unit velocity measurements]
Let $\{M_i\}_{i=1}^K$ be any finite set of unit velocity measurements from $\mathcal{D}\subset \mathbb{R}^N$ to $\mathcal{M}_i\subseteq \mathbb{C}$. Let $f$ be an analytic function from $\mathcal{M}_1\times \cdots\times \mathcal{M}_K$ to $\mathbb{C}$, and let its partial derivatives be denoted by $\{\partial_i f\}_{i=1}^K$. The function $f$ is an admissible shift on the set $\{M_i\}_{i=1}^K$ if the following relation holds:
\begin{equation}\label{eq:legalActionUM}
    \nabla f^T(\bm{M}(\bm{x})) P(\bm{x}) =1, \quad \forall \bm{x}\in \mathcal{D},
\end{equation}
where $\bm{M}(\bm{x})=\begin{bmatrix}
    M_1(\bm{x})&\cdots&M_K(\bm{x})
\end{bmatrix}$.
One can reformulate this condition by using the chain rule
\begin{equation}
    \nabla f P(\bm{x})=\sum_{i=1}^K \partial_i f \underbrace{\nabla M_i(\bm{x})^TP(\bm{x})}_{=1, \,\, Eq. \eqref{eq:unitPDE}} = \sum_{i=1}^K \partial_i f = 1.
\end{equation}
\end{definition}

\begin{definition}[Admissible time shift]\label{def:legalActions}
Let $\{\mu_i\}_{i=1}^K$ be any finite set of time mappings from an orbit $\mathcal{X}(\bm{x}_0)$ to the time interval $I$. Let $f$ be a differentiable function from $I^K$ to $I$, and $\{\partial_i f\}_{i=1}^K$ denote its partial derivatives. The function $f$ is an admissible time shift (acting on the set $\{\mu_i\}_{i=1}^K$ and the result is a time mapping) if it admits the following relation
\begin{equation}\label{eq:legalActionTM}
    \frac{df}{dt}=\sum_{i=1}^K\partial_i f = 1,
\end{equation}
at every point on the main diagonal of $I^K$, i.e. along the line $\mu_i=\mu_j, \, 1\leq i,j\leq K$, $\mu_i\in I,\, 1\leq i \leq K$, and $f(\bm{0})=0$ where $\bm{0}$ is a $K$ dimensional zero vector.
\end{definition}

One can see an admissible shift as any manipulation of time mappings that keeps the "physical" unit as "time". For example, let $\varphi$ and $\vartheta$ be \acp{KEF} where the corresponding eigenvalue $1$, and let  $\mu_{\varphi}$ and $\mu_{\vartheta}$ be the corresponding time mappings (Eq. \eqref{eq:inducedTM}). One can generate different time mappings as $\tilde{\mu}=\frac{\mu_{\varphi}+\mu_{\vartheta}}{2}$ or  $\hat{\mu} = \sqrt{\mu_{\varphi} \cdot \mu_{\vartheta}}$. Clearly, $\tilde{\mu}$ and $\hat{\mu}$ are time mappings since there time derivatives are $1$ and $\tilde{g}(\bm{x}_0)=\hat{g}(\bm{x}_0)=0$. In addition, these time mappings are induced from the \acp{KEF} $\varpi=\sqrt{\varphi\cdot \vartheta}$ and $\varrho = \exp\{\sqrt{\left(\ln\varphi-\ln\varphi_0\right)\left(\ln\vartheta-\ln\vartheta_0\right)}\}$, respectively.

\section{Minimal set}
Based on the algebraic-differential structure defined above, we prove there is a finite set of solutions of Koopman PDE that generates the whole space of these solutions. As a result, it is possible also to define a minimal generating set. Using the linear algebra simile, this finite set can be described as a "basis" of this space. Due to the projections of solutions of Koopman PDE onto unit velocity measurements, and of Koopman eigenfunctions on time mappings, and due to the conjugacy relations previously defined it suffices to prove the existence of a ``basis'' for unit velocity measurements. The proof for time mappings is identical, and then for solutions of the Koopman PDE, and Koopman eigenfunction one can use pullback arguments to obtain a ``basis'' for the set of conjugacy classes.

\subsection{Minimal Set }
\subsubsection{Unit Velocity Measurements -- Generating, Independence, Minimal Set}

\begin{definition}[Generating set]\label{def:generatingSetTM}
Let $\{M_i\}_{i=1}^K$ be a set of unit velocity measurements. This set is called a generating set for the entire set of unit velocity measurements if any unit velocity measurement $M$ can be presented as some admissible shift (as in Definition \ref{def:legalActions}) acting on this set.
\end{definition}

\begin{definition}[Generated set]\label{def:generatedSetTM}
Let $\{M_i\}_{i=1}^K$ be a set of unit velocity measurements then its generated set ,$\mathcal{G}(\{M_i\}_{i=1}^K)$, is the set of all the unit velocity measurements spanned by $\{M_i\}_{i=1}^K$ under the actions in Definition \ref{def:legalActions}.
\end{definition}

\begin{definition}[Geometric independence]\label{def:independentTM2}  
Let $\{M_i\}_{i=1}^K$ be a set of unit velocity measurements. This set is independent if the set of vectors $\{\nabla M_i\}_{i=1}^K$ is linearly independent for all $\bm{x}\in \mathcal{D}$.
\end{definition}

\begin{definition}[Algebraic--Deferential independence]\label{def:independentTM1} 
Let $\{M_i\}_{i=1}^K$ be a set of unit velocity measurements. This set is non-degenerated if $M_j\notin \mathcal{G}(\{M_i\}_{i=1,i\ne j}^M)$ for all $j=1,\cdots, K$.
\end{definition}

\begin{proposition} \label{prop:equivalence} 
Definitions \ref{def:independentTM2} and \ref{def:independentTM1} are equivalent.
\end{proposition}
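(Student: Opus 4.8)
The plan is to prove the two implications separately, relating the rank of the Jacobian $\{\nabla g_i\}$ at a point of $\mathcal{X}(\bm{x}_0)$ to the existence of a legal functional dependence among the $g_i$. First I would fix $\bm{x}_0$ and work on the one-dimensional curve $\mathcal{X}(\bm{x}_0)$; the key structural fact to exploit is that every $g_i$ restricted to the orbit is a reparametrization of the time interval $I$ (by construction $dg_i/dt = 1$ and $g_i(\bm{x}_0)=0$), so along the orbit $g_i \equiv t$ and the $g_i$ are indistinguishable --- the content of the proposition must therefore live in the transverse directions, i.e. in the gradients $\nabla g_i$ evaluated in the ambient $\mathbb{R}^N$.

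\emph{Algebraic independence $\Rightarrow$ geometric independence (contrapositive).} Suppose $\{\nabla g_i\}_{i=1}^K$ is linearly dependent at every $\bm{x}\in\mathcal{X}(\bm{x}_0)$. Then there exist coefficient functions $c_i$, not all zero, with $\sum_i c_i(\bm{x})\nabla g_i(\bm{x}) = 0$; after reordering, solve for one gradient, say $\nabla g_K = \sum_{i<K} a_i \nabla g_i$. I would argue that the $a_i$ can be taken constant (or at least locally constant along the leaf structure) --- this is where I expect to invoke the foliation/level-set picture hinted at in the introduction, since the $g_i$ share level sets once their gradients are proportional. Integrating the gradient relation then yields $g_K = F(g_1,\dots,g_{K-1})$ for some differentiable $F$; the constraint $dg_K/dt = 1 = \sum_i a_i\, dg_i/dt = \sum_i a_i$ forces $\sum a_i = 1$, so $F$ has unit time-derivative and is a legal action in the sense of Definition~\ref{def:legalActions}. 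Hence $g_K \in \mathcal{G}(\{g_i\}_{i\ne K})$, contradicting algebraic (non-degeneracy) independence.

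\emph{Geometric independence $\Rightarrow$ algebraic independence (contrapositive).} Suppose some $g_j \in \mathcal{G}(\{g_i\}_{i\ne j})$, i.e. $g_j = f(g_1,\dots,\widehat{g_j},\dots,g_K)$ for a legal action $f$. Differentiating with the chain rule gives $\nabla g_j = \sum_{i\ne j} \partial_i f \,\nabla g_i$, an explicit linear dependence among the gradients valid at every point of the orbit, so $\{\nabla g_i\}$ is not geometrically independent. This direction is essentially immediate from the chain rule and needs no extra structure.

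The main obstacle is the first direction: passing from a \emph{pointwise} linear dependence of the gradients (with possibly varying coefficients) to a \emph{global} functional relation $g_K = F(g_1,\dots,g_{K-1})$ that is moreover a \emph{legal} action. The subtlety is twofold --- ensuring the coefficient functions $a_i$ are constant along the curves on which $F$ must be evaluated (so that $F$ is well-defined as a function of the values $g_1,\dots,g_{K-1}$ alone), and checking the normalization $\sum a_i = 1$ so that $dF/dt=1$. I would handle the first by using that the common level sets of the $g_i$ foliate a neighborhood of the orbit (Frobenius-type integrability, which holds automatically for a single codimension-one relation), and the second falls out of evaluating both sides' time derivatives along the flow. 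The remaining bookkeeping --- that $F$ can be extended consistently over the whole orbit and is differentiable --- is routine given these two points.
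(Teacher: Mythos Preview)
Your proposal is essentially correct and follows the same two-contrapositive structure as the paper. The easy direction---a functional relation $g_j=f(g_1,\dots,\widehat{g_j},\dots,g_K)$ forces a linear dependence among the $\nabla g_i$ via the chain rule---is argued identically to the paper's first paragraph.

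In the harder direction your route diverges slightly from the paper. You aim to reduce the varying coefficients $a_i(\bm{x})$ to constants (or leaf-wise constants) by invoking a foliation/Frobenius picture, and only then integrate to obtain $F$. The paper does not take that detour: it keeps the $a_i(\bm{x})$ as functions of the state and simply appeals to an ``existence and uniqueness theorem'' to produce a function $f$ with $\nabla_g f=(a_1,\dots,a_{j-1},a_{j+1},\dots,a_K)$, after which it remarks in one line that $f$ is automatically legal because $g_j$ is a time mapping. Your normalization check $\sum_i a_i\,dg_i/dt=1$ is precisely the explicit version of that one-line remark. So the extra work you flag as ``the main obstacle''---forcing constant coefficients and checking Frobenius integrability---is more scrupulous than what the paper actually does; the paper treats the integration step as immediate. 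Your approach would buy a more rigorous justification of why $f$ is well-defined as a function of the values $g_1,\dots,g_{K-1}$ rather than of $\bm{x}$, a point the paper's proof leaves implicit.
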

\begin{proof} Let $\{M_i\}_{i=1}^K$ be a degenerated set, i.e. there is $M_j$ in this set, and an admissible shift $f$, such that $M_j = f(\{M_i\}_{i=1,i\ne j}^K)$. Using the chain rule, one can formulate the gradient of $M_j$ as
\begin{equation}
    \nabla M_j(\bm{x}) =  \sum_{i=1,i\ne j}^K \partial_i f \nabla M_i(\bm{x}).
\end{equation}
Hence, the set $\{M_i\}_{i=1}^K$ is a dependent set according to Definition \ref{def:independentTM2}.

Next, let $\{M_i\}_{i=1}^K$ be a dependent set according to Definition \ref{def:independentTM2}, i.e. there is a unit velocity measurement $M_j(\bm{x})$ for which the following holds:
\begin{equation}
    \nabla M_j(\bm{x})=\sum_{i=1,i\ne j}^{K} a_i(\bm{x}) \nabla M_i(\bm{x}).
\end{equation}
According to the existence and uniqueness theorem, there exist a function, $f$ such that
\begin{equation}\label{eq:fDependent}
    \nabla_M f = \begin{bmatrix}a_1(\bm{x})&\cdots&a_{j-1}(\bm{x})&a_{j+1}(\bm{x})&\cdots&a_K(\bm{x})\end{bmatrix}^T,
\end{equation}
with the initial condition 
\begin{equation}
    M_j(\bm{x}_0)=f(\{M_i\}_{i=1,i\ne j}^K(\bm{x}_0)).
\end{equation}
Therefore, the set $\{M_i\}_{i=1}^K$ is a degenerated set according to Definition \ref{def:independentTM1}.
Note, the function $f$ from Eq. \eqref{eq:fDependent} is an admissible shift in term of Definition \ref{def:legalActions} since $M_j$ is a unit velocity measurement.
\end{proof}

 \begin{proposition}[Maximal cardinality of a set of independent unit velocity measurements]\label{prop:Maximal cardinality} 
 Any $N+1$ unit velocity measurements are dependent.
\end{proposition}
\begin{proof}
    Let $\{M_i\}_{i=1}^{N+1}$ be a set of $N+1$ unit velocity measurements. Conversely, assume this set is independent. According to \cref{def:independentTM2} the set $\{\nabla M_i\}_{i=1}^{N+1}$ is linearly independent which is impossible since $\nabla M_i$ is an $N$ dimensional vector. Therefore, the cardinality of the largest independent unit velocity measurement set is up to $N$.
\end{proof}

\begin{definition}[Maximal independent set]\label{def:maximalIndependentTM} Let $\mathcal{G}$ be an infinite set of unit velocity measurements. Let $G = \{M_i(\bm{x})\}_{i=1}^K$ be an independent set of unit velocity measurements, where $G\subset\mathcal{G}$. It is called a maximal independent set if any set $\hat{G}$ which strictly contains $G$ is dependent.
\end{definition}

\begin{definition}[Minimal generating set]\label{def:minGeneratingSet} Let $\mathcal{G}$ be an infinite set of unit velocity measurements. A generating set of $\mathcal{G}$ is a minimal if any strict subset of it does not generate $\mathcal{G}$.
\end{definition}

\begin{theorem}[Minimal generating set and maximal independent set]\label{theo:minimal} Let $\mathcal{G}$ be the set of all unit velocity measurements of the dynamic $P$. Any maximal independent set of $\mathcal{G}$ is also a minimal generating set of $\mathcal{G}$ and vice versa.
\end{theorem}
\begin{proof}
    Let $G$ be a maximal independent set of $\mathcal{G}$. Let $\mathcal{H}$ be the generated set of $G$. We would like to show that $\mathcal{H}=\mathcal{G}$. Obviously $\mathcal{H} \subseteq \mathcal{G}$. Suppose there is some $M$ in $\mathcal{G}$ and not in $\mathcal{H}$ then $\{M,G\}$ is independent, according to \cref{def:independentTM1} and \cref{prop:equivalence}. This contradicts the fact that $G$ is a maximal independent set. Therefore, $G$ is a generating set of $\mathcal{G}$. Now we are left to prove that all elements in $G$ are essential to generate $\mathcal{G}$, however, 
    it is clear since $G$ is independent. 

    Now, let $G$ be a minimal generating set of $\mathcal{G}$. According to either Definition \ref{def:independentTM2} or Definition \ref{def:independentTM1} $G$ must be independent otherwise it is not minimal. In addition, for any function $M$ in $\mathcal{G}$ the set $\{M, G\}$ is dependent because $G$ itself already generates $\mathcal{G}$. Hence, $G$ is a maximal indendent set.
\end{proof}

\begin{corollary}[Finite Cardinality of Generating Set of Koopman PDE Solution]
    The cardinality of a generating set of unit velocity measurements is finite. Then, also the cardinality of the set of conjugacy classes of solutions of the Koopman PDE is finite. Thus, if we limit our discussion to the Koopman Eigenfunction set which are restrictions of the solutions of Koopman PDE, the set also has a generating set with finite cardinality. 
\end{corollary}

By \cref{prop:Maximal cardinality} the cardinality of a minimal set is finite and does not exceed the dimensionality of the system. In the rest of this paper, it is assumed that the cardinality is maximal. The discussion about the conditions under which the cardinality is maximal exceeds the frame of this paper, however, there is a reference to that in the numerical results. Such a minimal set induces a new coordinate system for which the velocity equals $1$ in each coordinate. Upon the condition of independence, the new dynamic is called \emph{canonical split dynamic} if the gradients are all orthogonal to each other, defined as follows.

\begin{definition}[Canonical split dynamic]\label{def:canonicalSplitDynamic} Let $\{M_i\}_{i=1}^N$ be a minimal set where $\{\nabla M_i\}_{i=1}^N$ is an orthogonal set for all $\bm{x}\in \mathcal{D}$. A canonical dynamic splitting is the dynamic represented in the coordinate system $\hat{y}_i=M_i(\bm{x})$. Then, the dynamic can be reformulated as
    \begin{equation}\label{eq:canonicalDynamicSplitting}
    \begin{split}
        \dot{\hat{y}}_1&=1\\
        \quad\vdots\\
        \dot{\hat{y}}_N&=1
    \end{split}.
\end{equation}
Obviously, $\det\left(J(\hat{\bm{y}})\right)\ne 0$ for all $\bm{x}\in\mathcal{D}$.
\end{definition}

The word "canonical" stands for the independence between the coordinates, for the unit velocity for each coordinate, and for the orthogonality of the gradients. One can get a split dynamic only by independence between the coordinates and under the condition that $\det\left(J(\hat{\bm{y}})\right)\ne 0$ for all $\bm{x}\in\mathcal{D}$. Note, that the canonical split dynamic is not unique. Unfortunately, the discussion about the conditions under which this dynamic decomposition exists exceeds the frame of this paper.

Recall that the flowbox theorem states that given a Lipschitz vector field, there is an invertible transformation from a  neighborhood of a point, that is far a way from a singularity of the system, to a coordinate system for which the vector field is trivial, i.e. unit velocity in one coordinate and zero in the rest  \cite{calcaterraboldt2008flowbox}. As a result from \cref{def:canonicalSplitDynamic}, a flowboxed coordinate system is a rotation and rescaling of a coordinate system induced from a minimal set. Thus, a minimal set leads to a flowboxed coordinate system and vice versa.

\section{Minimal Set - Numeric Part} In this part, we translate the insight from the analytic part to build a neural network (NN for short) to find a minimal set of unit velocity measurements and indirectly a minimal set of conjugacy classes of solutions of the Koopman PDE. Feeding the NN with the vector field in $\mathcal{D}$, $P(\bm{x})$, yields a minimal set of unit velocity measurements, $\{M_i(\bm{x})\}_{i=1}^N$ in $\mathcal{D}$. The numeral part is based on \cref{def:canonicalSplitDynamic}. Thus, the loss function naturally emerges from this definition, given by
\begin{equation}\label{eq:functional}
 \mathcal{L}(\bm{x}) = \underbrace{\sum_{i=1}^N\left(\inp{\nabla M_i(\bm{x})}{P(\bm{x})}-1\right)^2}_{A}+\underbrace{\sum_{i=1}^{N-1}\sum_{j=i+1}^N\inp{\nabla M_i}{\nabla M_j}^2}_{B}.
\end{equation}
The summand $A$ guarantees unit velocity for each measurement. The $B$ term guarantees the orthogonality.
While the convergence of $A$ is necessary to find $N$ unit velocity measurements, the convergence of $B$ is not, but only to assure different manifolds. Thus, $B$ can be factorized by a constant $0<\lambda<1$ according to the system in question. In the next, section results from NN are presented using the loss function in \cref{eq:functional}.

\section{Results}
In this section, the theory discussed above is demonstrated by applying it to 2D linear and nonlinear dynamical systems. For every system, the analytic and numeric solutions are presented. The functional \cref{eq:functional} is used as the loss function and gets the following form
\begin{equation}\label{eq:functional2D}
    \begin{split}
        \mathcal{L}&=\left(\inp{\nabla M_1(\bm{x})}{P(\bm{x})}-1\right)^2+\\
        &\quad\left(\inp{\nabla M_2(\bm{x})}{P(\bm{x})}-1\right)^2+\\
        &\quad \inp{\nabla M_1}{\nabla M_2}^2.
    \end{split}.
\end{equation}
The general setting is as follows. In each example, the vector field is known, and the analytic and numeric results are presented. In addition, we demonstrate failure in finding a general minimal set when the solutions of the Koopman PDE are zero.   
\subsection{Linear systems}
Let us consider the following linear systems of the form
\begin{equation}\label{eq:2DlinearSys}
    \begin{bmatrix}
        \dot{x}_1\\\dot{x}_2
    \end{bmatrix}=A\begin{bmatrix}
        x_1\\x_2
    \end{bmatrix}
\end{equation}
where $A$ is a $2\times 2$ matrix gets the values of $A_r,\, A_C$, and $A_I$
\begin{equation*}
    A_R = \frac{1}{2}\begin{bmatrix}
        11&-5\\
        -5&11
    \end{bmatrix}, \quad A_C = \frac{1}{10}\begin{bmatrix}
        -4&1\\-4&-5
        \end{bmatrix}, \quad A_I = \begin{bmatrix}
        0&1\\-1&0
    \end{bmatrix}.
\end{equation*}
These systems have either real, complex, or imaginary eigenvalues, respectively.

\subsubsection{Real eigenvalues}

\paragraph{\bf{Analytic part}}
The eigenpairs of $A_R$ are $\{8,\frac{1}{\sqrt{2}}\begin{bmatrix}
    1\\-1
\end{bmatrix}\}$, $\{3,\frac{1}{\sqrt{2}}\begin{bmatrix}
    1\\1
\end{bmatrix}\}$, and the solution is 
\begin{equation}
    \begin{bmatrix}
        x_1\\x_2
    \end{bmatrix}= \frac{a_1}{\sqrt{2}}\begin{bmatrix}
        1\\-1
    \end{bmatrix}e^{8t}+\frac{a_2}{\sqrt{2}}\begin{bmatrix}
        1\\1
    \end{bmatrix}e^{3t}.
\end{equation}
The solutions of Koopman PDE are $\Phi_1 = \dfrac{1}{\sqrt{2}}(x_1+x_2)$ and $\Phi_2 = \dfrac{1}{\sqrt{2}}(x_1-x_2)$ where $\nabla \Phi_1$ and $\nabla \Phi_2$ are orthogonal. As long as $\Phi_1,\Phi_2\ne 0$ one can formulate a minimal set of unit velocity measurements are
\begin{equation}\label{eq:linRealEVSplitTM}
    M_1=\frac{1}{3}\ln{\abs{\Phi_1}},\quad 
        M_2=\frac{1}{8}\ln{\abs{\Phi_2}},
\end{equation}
which follows the canonical split dynamic definition. The flowboxed coordinates are
\begin{equation}\label{eq:linRealEVSplitFB}
    z_1=\dfrac{M_1+M_2}{2},\quad 
        z_2=\dfrac{M_1-M_2}{2}
\end{equation}

\paragraph{\bf{Numeric part}} In \cref{fig:2DLinReal}, the domain $\mathcal{D} =[4,6]\times [1,3]$ is depicted. The first row is the analytic solution, Eq. \eqref{eq:linRealEVSplitFB}, and the second row is the numeric one. The left column is the vector field in the system coordinates (blue), level sets of the flowbox coordinates $z_0$ (black) and $z_1$ (red). The right column is the vector field in the flowbox coordinates.

\begin{figure*}[phtb!]
    \centering
    \captionsetup[subfigure]{justification=centering}
    \begin{subfigure}[t]{0.48\textwidth} 
\includegraphics[width=0.85\textwidth,valign = t]{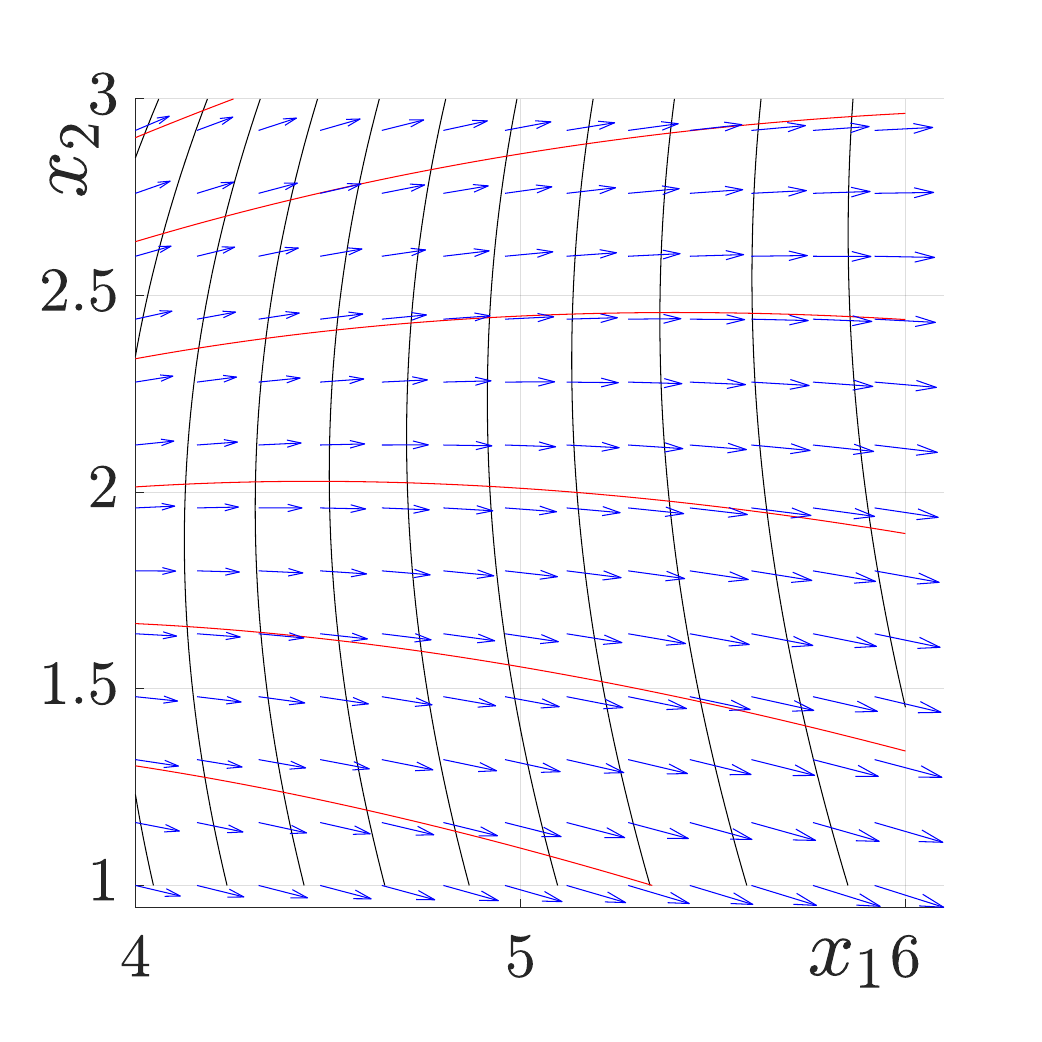}
        \label{subfig:linearSysRealEVFBcon}
    \end{subfigure}
    \begin{subfigure}[t]{0.48\textwidth} 
\includegraphics[width=0.85\textwidth,valign = t]{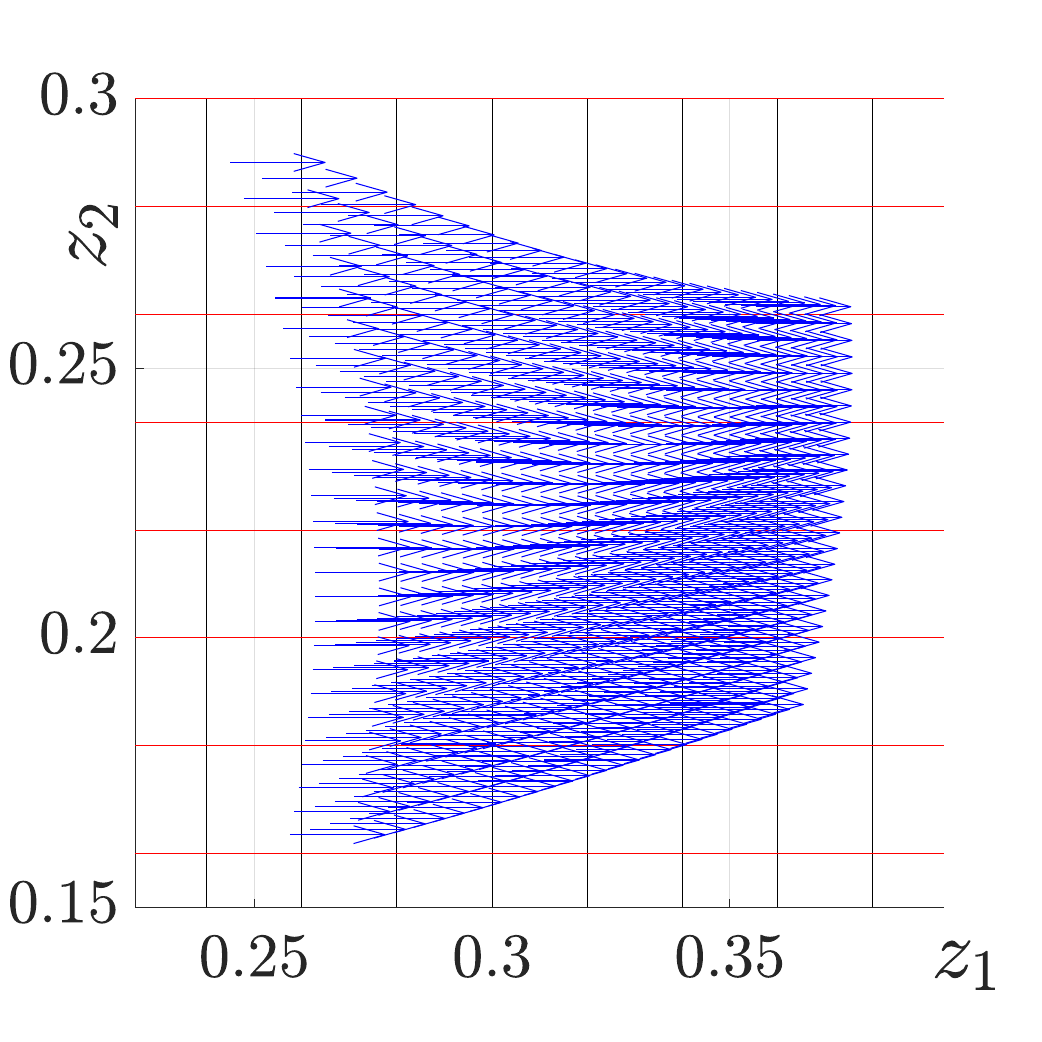}
        \label{subfig:linearSysRealEVFB}
    \end{subfigure}
        \label{fig:linearSysRealEVAnalytic}
        \caption*{Analytic solution of the system $A_R$ the flowbox coordinates via a minimal set}    
    \begin{subfigure}[t]{0.48\textwidth} 
\includegraphics[width=0.85\textwidth,valign = t]{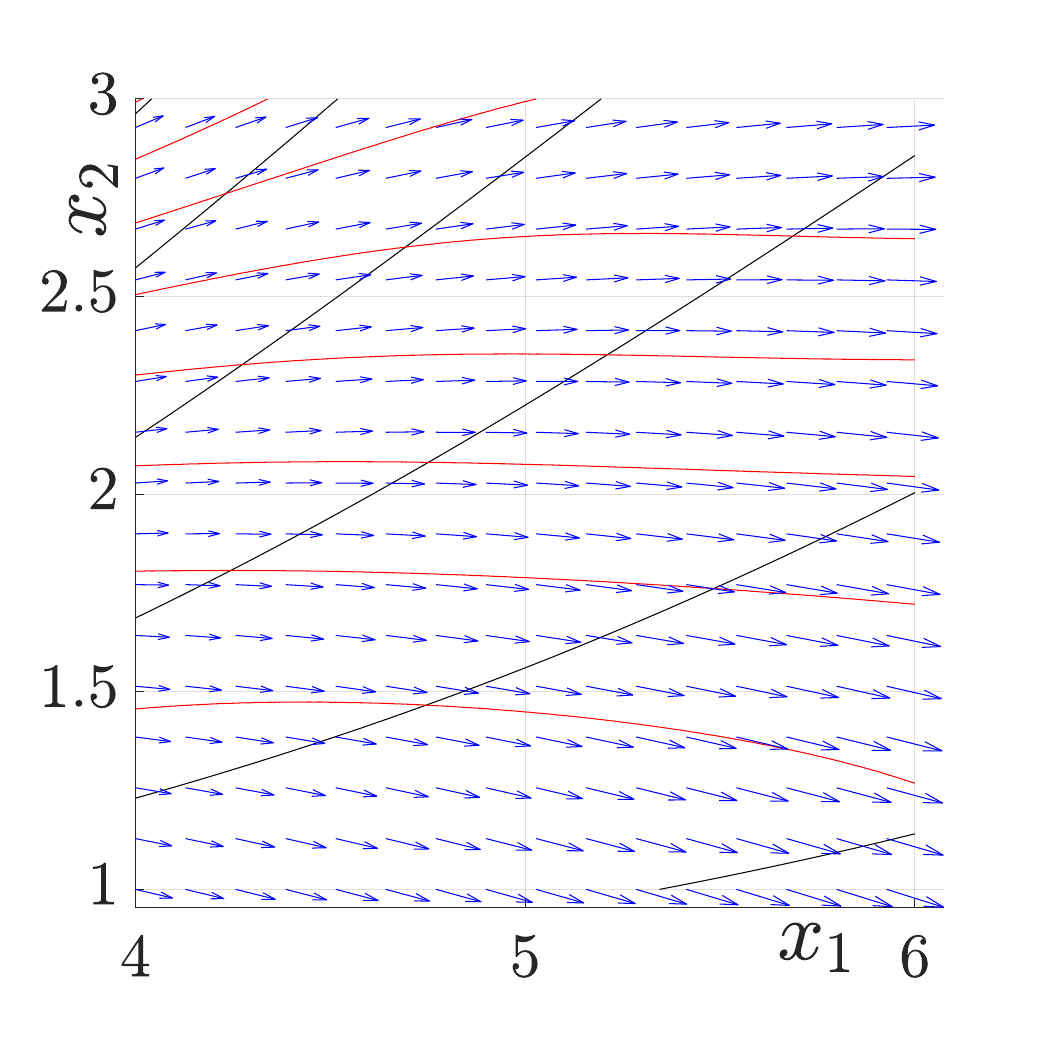}
        \label{subfig:linearSysRealEVFBconNumwithoutFoliationCleanRe}
    \end{subfigure}
    \begin{subfigure}[t]{0.48\textwidth} 
\includegraphics[width=0.85\textwidth,valign = t]{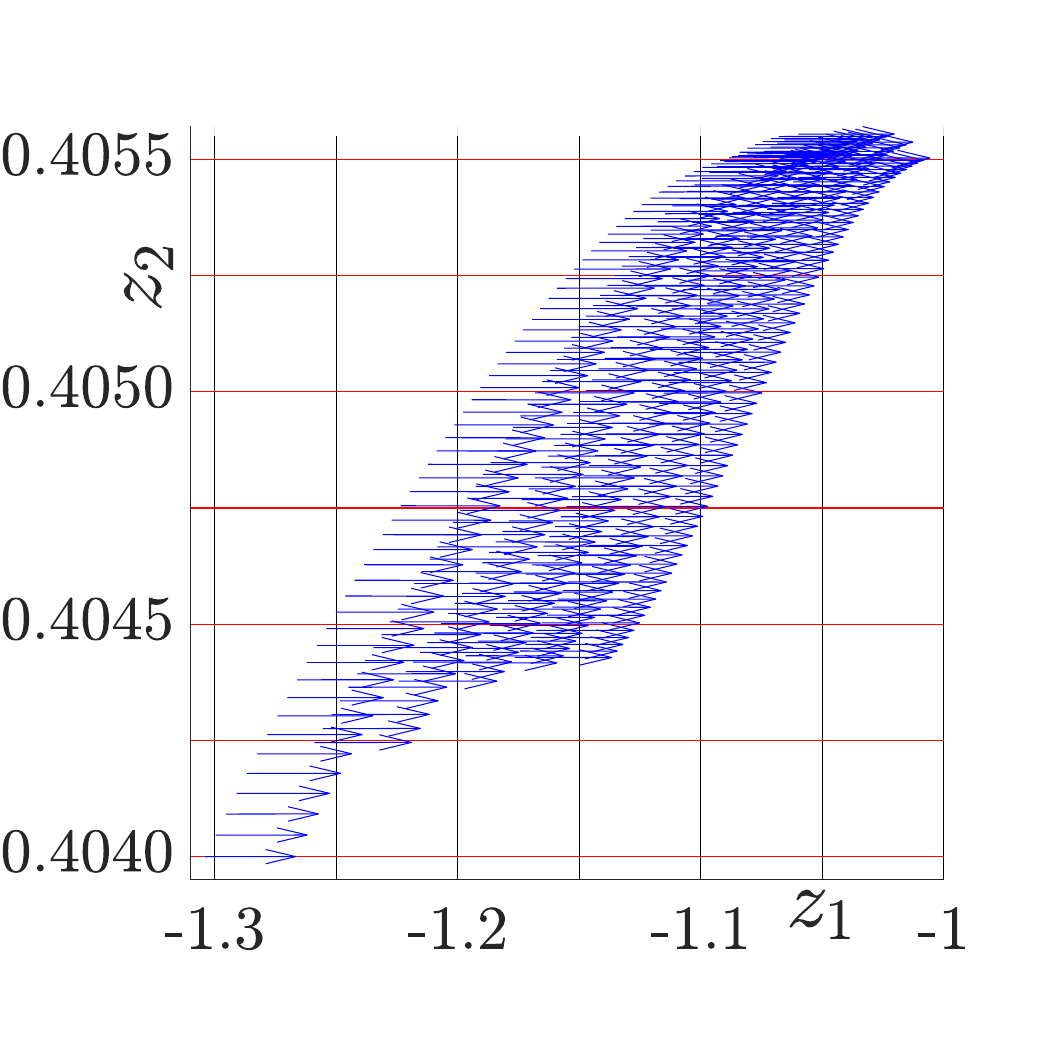}
        \label{subfig:linearSysRealEVFBNumwithoutFoliationCleanRe}
    \end{subfigure}
    \caption*{Numeric generation of a minimal set and the corresponding flowbox coordinates}
    \caption{Vector field (blue), level sets of the flowbox coordinates}
    \label{fig:2DLinReal}
\end{figure*}

\subsubsection{Complex eigenvalues}
\paragraph{\bf{Analytic part}} The eigenpairs of $A_C$ are  
\begin{equation*}
        \lambda_{1,2}=-\frac{9}{20}\pm \frac{\sqrt{15}}{20}i,\quad \bm{v}_{1,2}=\left(\begin{matrix}
            -\dfrac{\sqrt{5}}{20}\mp \dfrac{\sqrt{3}}{4}i\\
            2\dfrac{\sqrt{5}}{5}
        \end{matrix}\right)
\end{equation*}
and the solution is 
\begin{equation}
    \bm{x}(t) = a_1\bm{v}_1\exp\{\lambda_1 t\}+a_2\bm{v}_2\exp\{\lambda_2 t\}.
\end{equation}
The solutions of the Koopman PDE are $\Phi_1(\bm{x}) = \inp{\bm{v}_1^\perp}{\bm{x}}$ and $\Phi_2(\bm{x}) = \inp{\bm{v}_2^\perp}{\bm{x}}$ where 
\begin{equation}
\bm{v}_1^\perp=\begin{bmatrix}
    -\dfrac{2}{\sqrt{5}}i\\
\dfrac{\sqrt{3}}{4} - \dfrac{\sqrt{5}}{20}i
\end{bmatrix},\quad 
\bm{v}_2^\perp=\begin{bmatrix}
    \dfrac{2}{\sqrt{5}}i\\
\dfrac{\sqrt{3}}{4} + \dfrac{\sqrt{5}}{20}i
\end{bmatrix}.
\end{equation}
However, these solutions are not orthogonal. A minimal set yielded from these solution is
\begin{equation}
    \begin{split}
        M_1(\bm{x})&=\frac{1}{\lambda_1}\ln{\abs{\Phi_1(\bm{x})}},\quad 
        M_2(\bm{x})=\frac{1}{\lambda_2}\ln{\abs{\Phi_2(\bm{x})}}
    \end{split}.
\end{equation}
In the same vein, the flowbox coordinates are 
\begin{equation}
    \begin{split}
        z_1&=\frac{M_1+M_2}{2},\quad 
        z_2=\frac{M_1-M_2}{2}
    \end{split}
\end{equation}

\paragraph{\bf{Numeric part}} We summarize the the analytic and the numeric results in \cref{fig:2DLinComplex}.

\begin{figure*}[phtb!]
    \centering
    \captionsetup[subfigure]{justification=centering}
    \begin{subfigure}[t]{0.48\textwidth} 
\includegraphics[width=0.85\textwidth,valign = t]{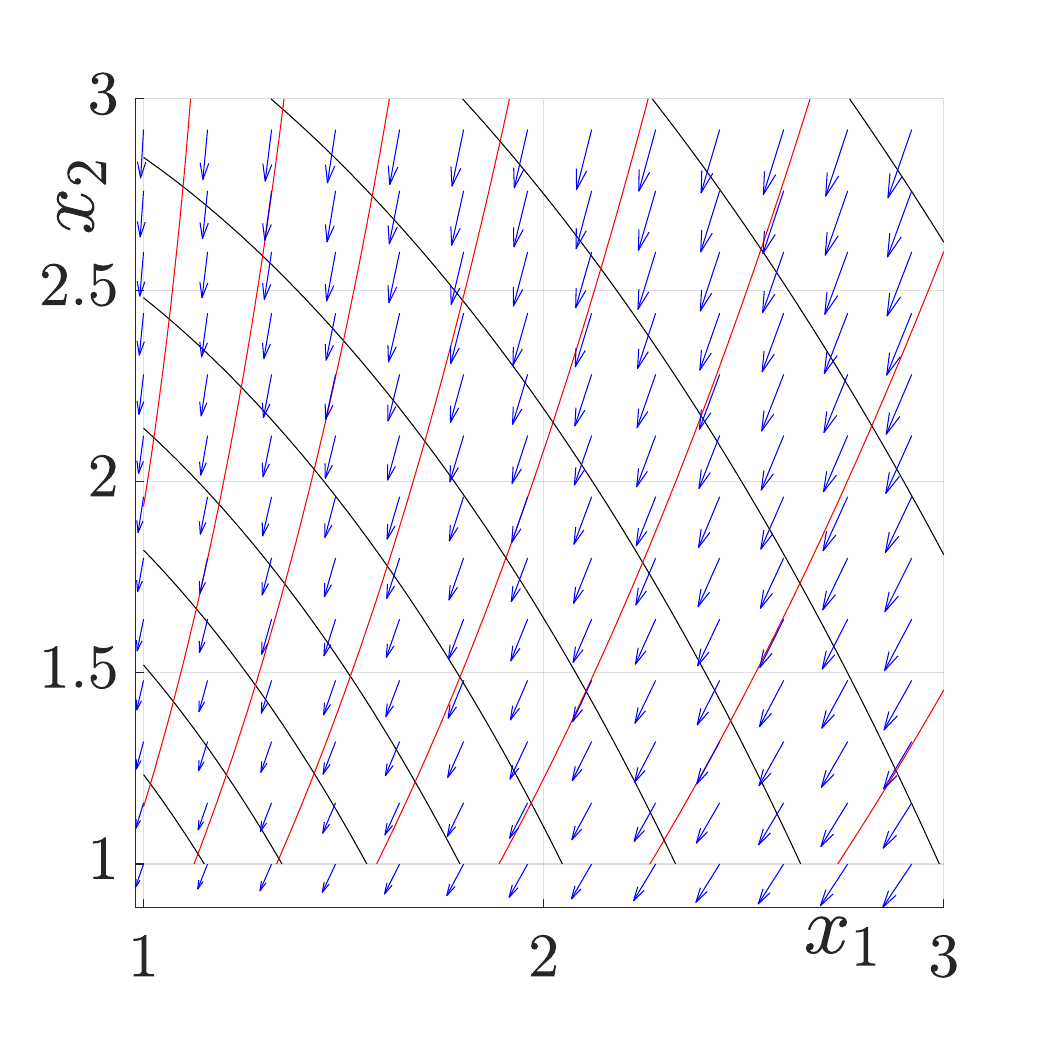}
        \label{subfig:linearSysComplexEVFBcon}
    \end{subfigure}
    \begin{subfigure}[t]{0.48\textwidth} 
\includegraphics[width=0.85\textwidth,valign = t]{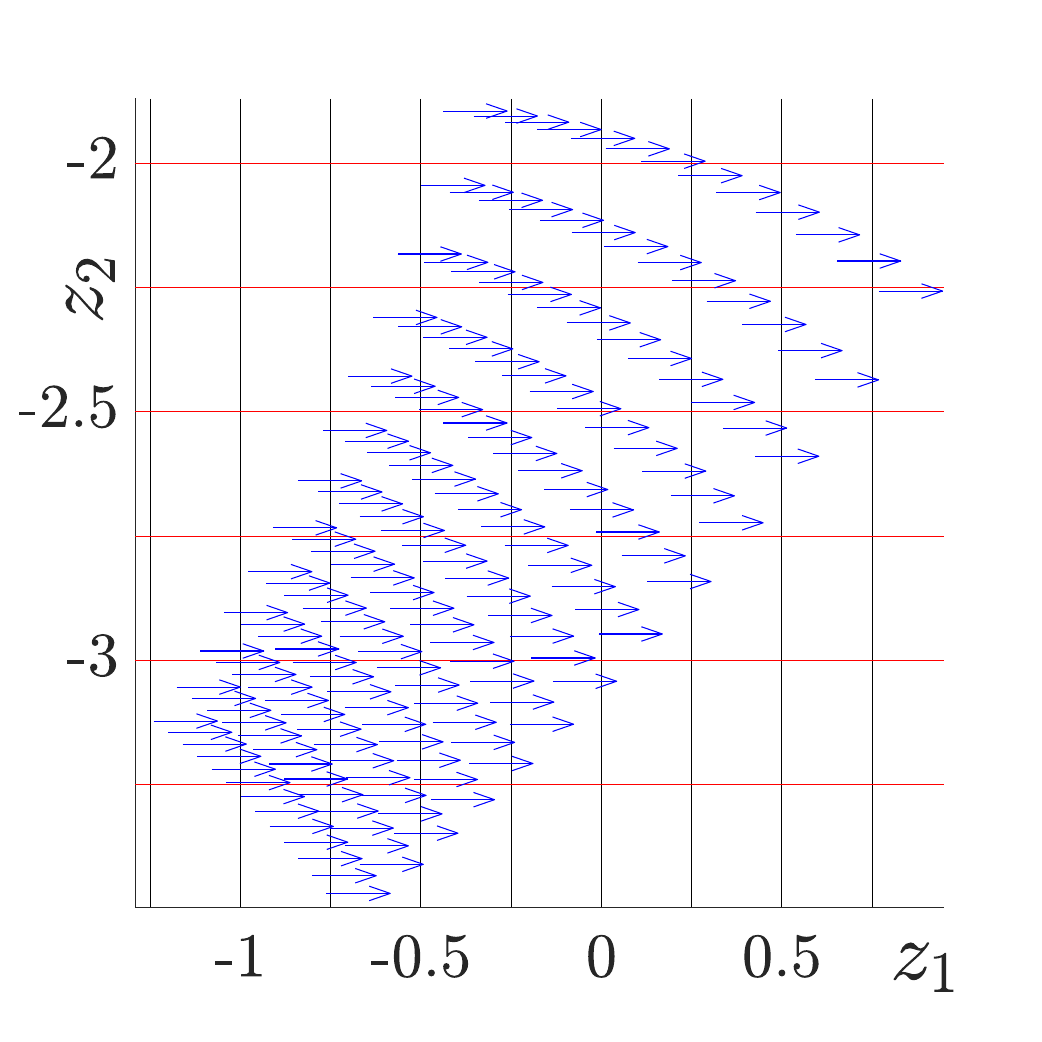}
        \label{subfig:linearSysComplexEVFB}
    \end{subfigure}\\
    \caption*{Analytic solution of the system $A_C$ finding flowbox coordinates via a minimal set}
    \begin{subfigure}[t]{0.48\textwidth} 
\includegraphics[width=0.85\textwidth,valign = t]{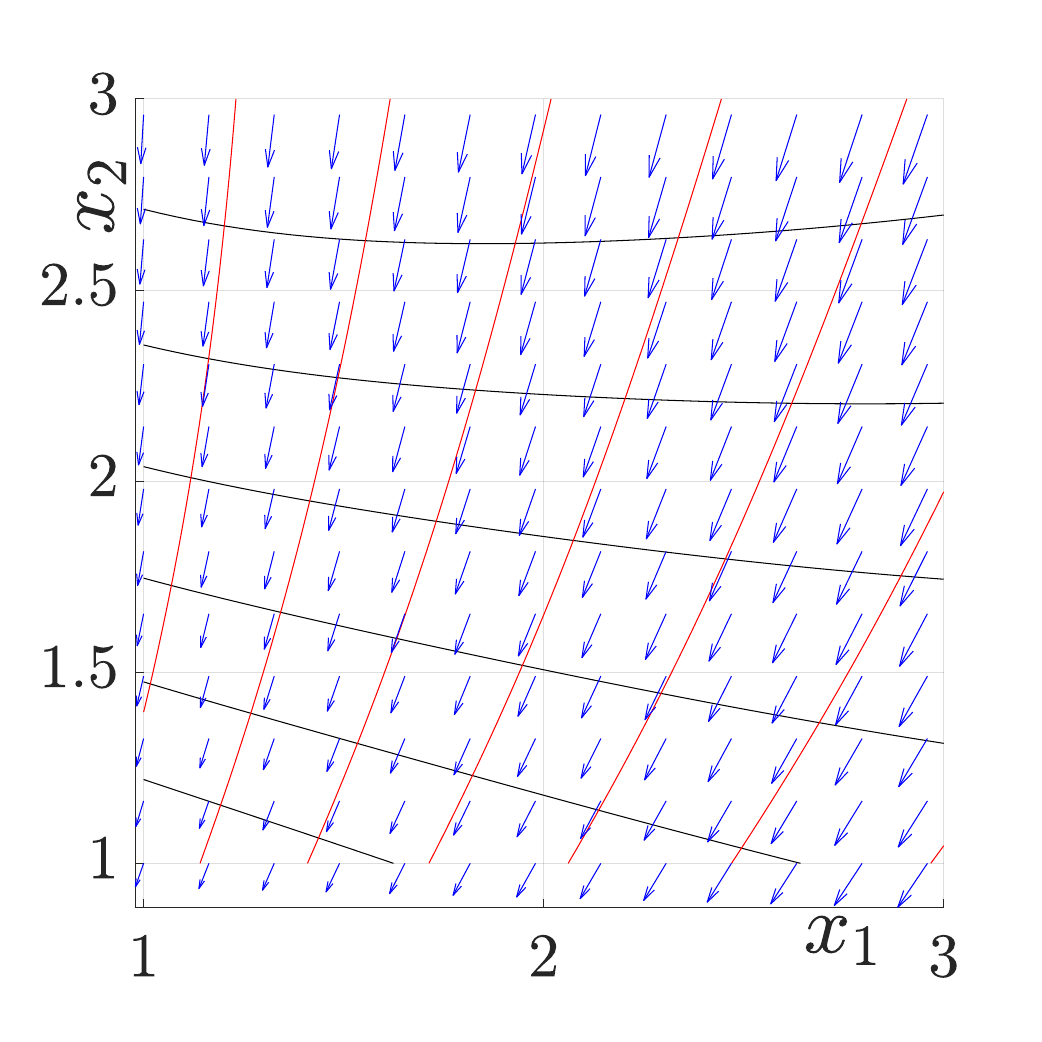}
        \label{subfig:linearSysComplexEVFBconNum}
    \end{subfigure}
    \begin{subfigure}[t]{0.48\textwidth} 
\includegraphics[width=0.85\textwidth,valign = t]{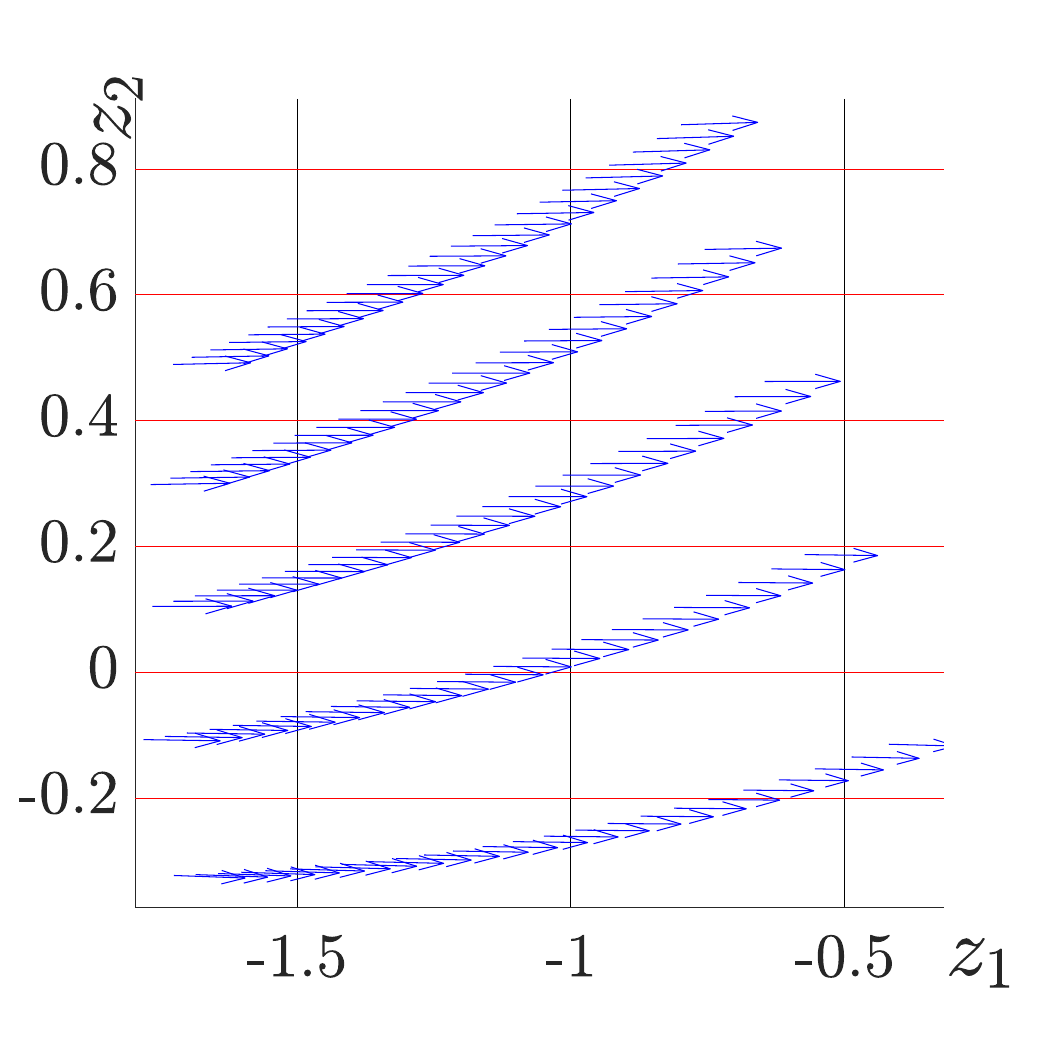}
        \label{subfig:linearSysComplexEVFBNum}
    \end{subfigure}
    \caption*{Numeric generation of a minimal set and the corresponding flowbox coordinates}
    \caption{Vector field (blue), level sets of the flowboxed coordinate system $z_1$ and $z_2$ (black and red).}
    \label{fig:2DLinComplex}
\end{figure*}

\subsubsection{Imaginary eigenvalues}
\paragraph{\bf{Analytic part}} The eigenpairs of $A_I$ are 
\begin{equation*}
        \lambda_{1,2}=\pm i,\quad \bm{v}_{1,2}=\frac{1}{\sqrt{2}}\left(\begin{matrix}
            1\\
            \pm i
        \end{matrix}\right)
\end{equation*}
and the solution is
\begin{equation}\label{eq:fullSolHarSys}
    \begin{bmatrix}
        x_1\\
        x_2
    \end{bmatrix} = \frac{a_1}{\sqrt{2}}\begin{bmatrix}
        1\\ i
    \end{bmatrix}e^{it}+\frac{a_2}{\sqrt{2}}\begin{bmatrix}
        1\\ -i
    \end{bmatrix}e^{-it}.
\end{equation}
We split the system with the linear transformation $\Phi_1(\bm{x}) = (x_1-i\cdot x_2)/\sqrt{2}$, $\Phi_2(\bm{x}) = (x_1+i\cdot x_2)/\sqrt{2}$. Canonical split dynamic coordinates are
\begin{equation}
        M_1(\bm{x})=-i \ln (\Phi_1(\bm{x})),\quad 
        M_2(\bm{x})=i \ln (\Phi_2(\bm{x}))
\end{equation}
and the flowboxed coordinates are 
\begin{equation}
z_1 = \dfrac{M_1 + M_2}{2},\quad z_2 = \dfrac{M_1 - M_2}{2}.    
\end{equation}
\paragraph{\bf{Numeric part}} In \cref{fig:2DLinImag} this system is depicted.

\begin{figure*}[phtb!]
    \centering
    \captionsetup[subfigure]{justification=centering}
    \begin{subfigure}[t]{0.48\textwidth}
\includegraphics[width=0.85\textwidth,valign = t]{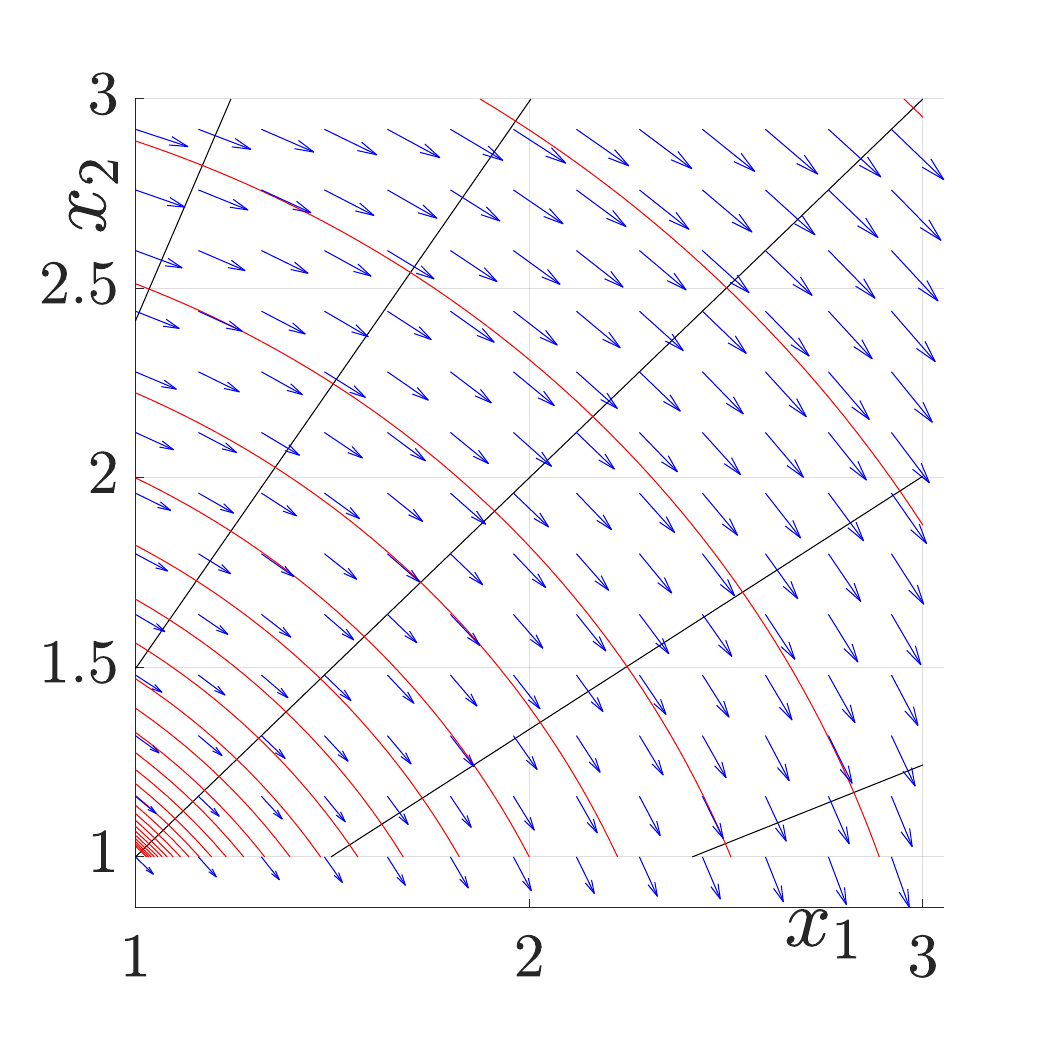}
        \label{subfig:linearSysImaginaryEVFBcon}
    \end{subfigure}
    \begin{subfigure}[t]{0.48\textwidth} 
\includegraphics[width=0.85\textwidth,valign = t]{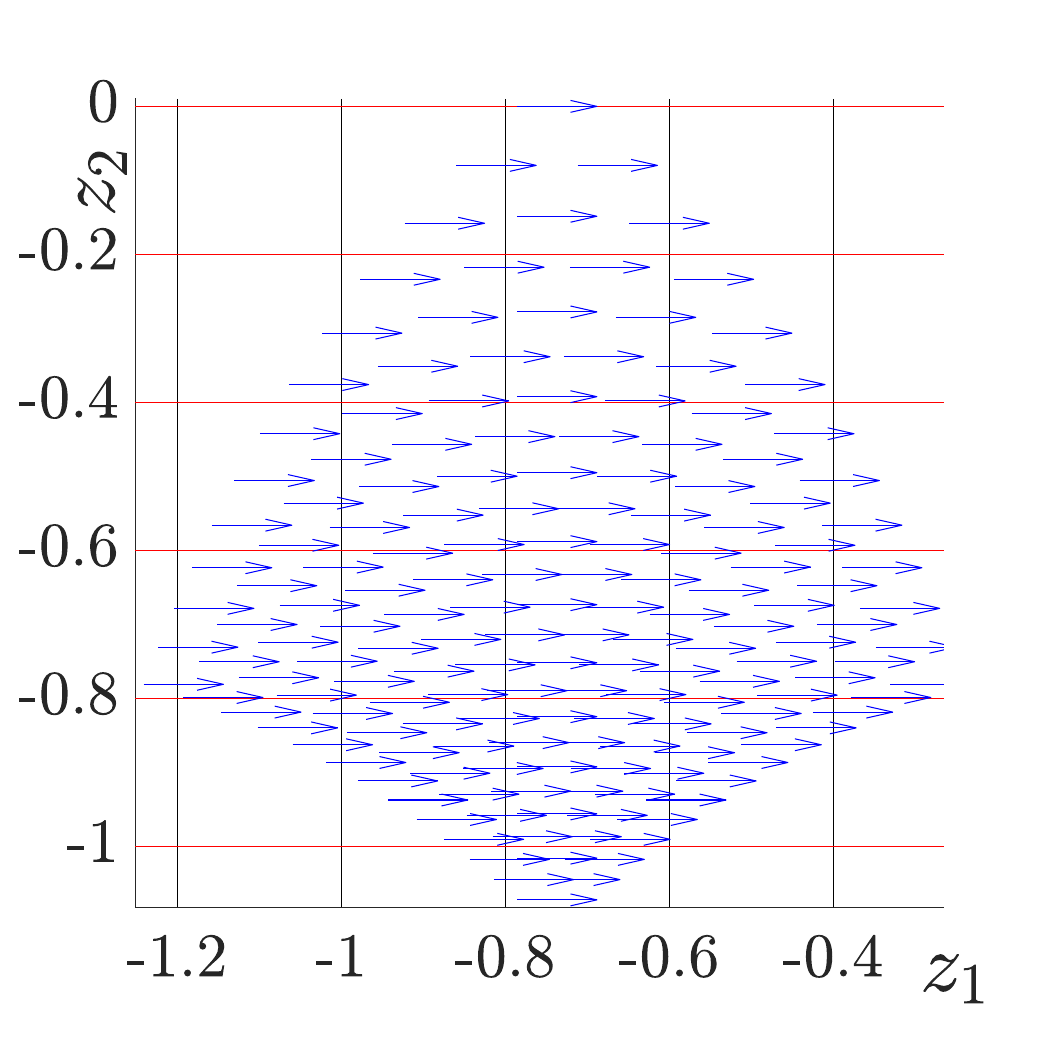}
        \label{subfig:linearSysImaginaryEVFB}
    \end{subfigure}\\
    \caption*{Analytic solution of the system, $A_I$, finding flowboxed coordinates via a minimal set}
    \captionsetup[subfigure]{justification=centering}
    \begin{subfigure}[t]{0.48\textwidth} 
\includegraphics[width=0.85\textwidth,valign = t]{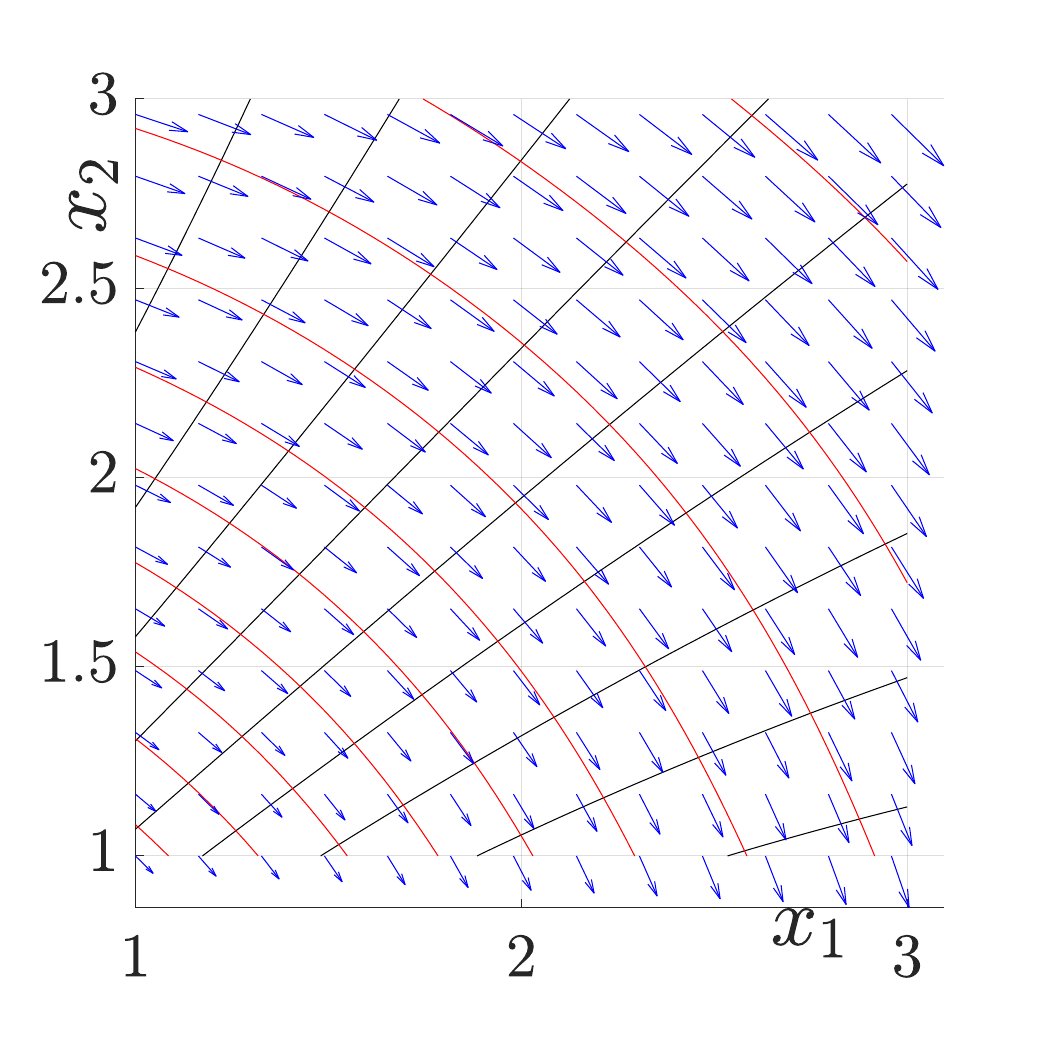}
        \label{subfig:linearSysImaginaryEVFBconNum}
    \end{subfigure}
    \begin{subfigure}[t]{0.48\textwidth} 
\includegraphics[width=0.85\textwidth,valign = t]{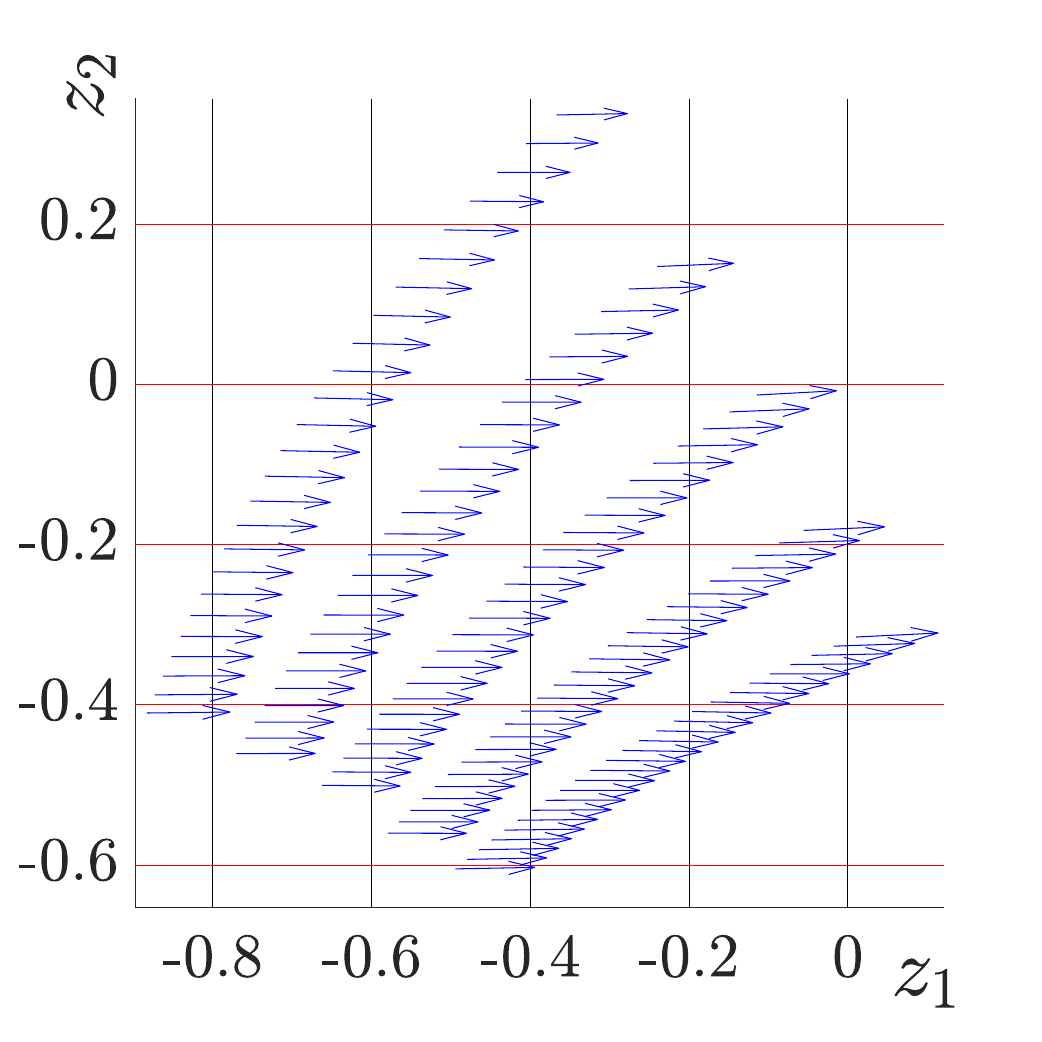}
        \label{subfig:linearSysImaginaryEVFBNum}
    \end{subfigure}
    \caption*{Numeric generation of a minimal set and the corresponding flowbox coordinates.}
    \caption{Flowboxing of a two dimensional system with imaginary eigenvalues}
    \label{fig:2DLinImag}
\end{figure*}

\subsection{Nonlinear System}
\subsubsection*{Limit cycle}
Let us consider the following dynamical system
\begin{equation}\label{eq:2DnonlinearDyn}
    \begin{split}
        \dot{x}_1 &= -x_2+ x_1 (1 - x_1^2 - x_2^2)\\
        \dot{x}_2 &= x_1+ x_2 (1 - x_1^2 - x_2^2)
    \end{split},
\end{equation}
initialized with $\bm{x}_0$ where $\bm{x}_0\ne \bm{0}, \norm{\bm{x}_0}\ne 1$. This dynamic is split with the polar coordinates, $r=\sqrt{x_1^2 +x_2^2},\, \theta = \dfrac{1}{2i}\ln\left(\dfrac{x_1+ix_2}{x_1-ix_2}\right)$.The dynamic in those coordinates are
\begin{equation}
    \begin{split}
        \dot{r}&=(1-r)(1+r)r\\
        \dot{\theta} &= 1
    \end{split}.
\end{equation}
A canonical split dynamic is
\begin{equation}
    M_1(\bm{x}) = \ln\left(\frac{\sqrt{x_1^2+x_2^2}}{\sqrt[]{\abs{1-x_1^2-x_2^2}}}\right),\quad
    M_2(\bm{x}) = \dfrac{1}{2i}\ln\left(\dfrac{x_1+ix_2}{x_1-ix_2}\right).
\end{equation}
And as usual, the flowboxed coordinates are
\begin{equation}\label{eq:2DnonlinearFB}
        z_1 = \frac{M_1+M_2}{2},\quad z_2 = \frac{M_1-M_2}{2}.
\end{equation}
\cref{fig:2DNonlin} depicts this dynamic system.

\begin{figure*}[phtb!]
    \centering
    \captionsetup[subfigure]{justification=centering}
    \begin{subfigure}[t]{0.48\textwidth} 
\includegraphics[width=0.85\textwidth,valign = t]{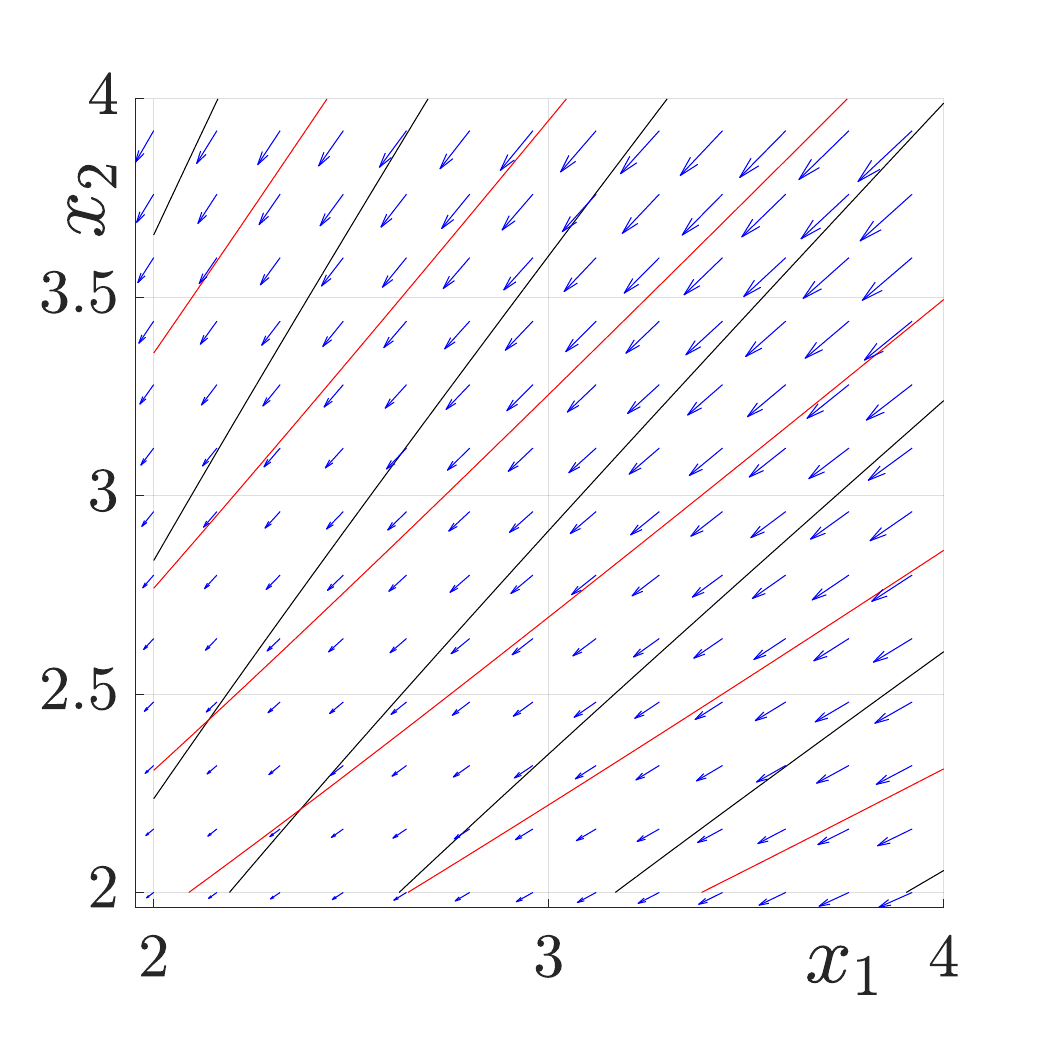}
        \label{subfig:NonlinearSysFBcon}
    \end{subfigure}
    \begin{subfigure}[t]{0.48\textwidth} 
\includegraphics[width=0.85\textwidth,valign = t]{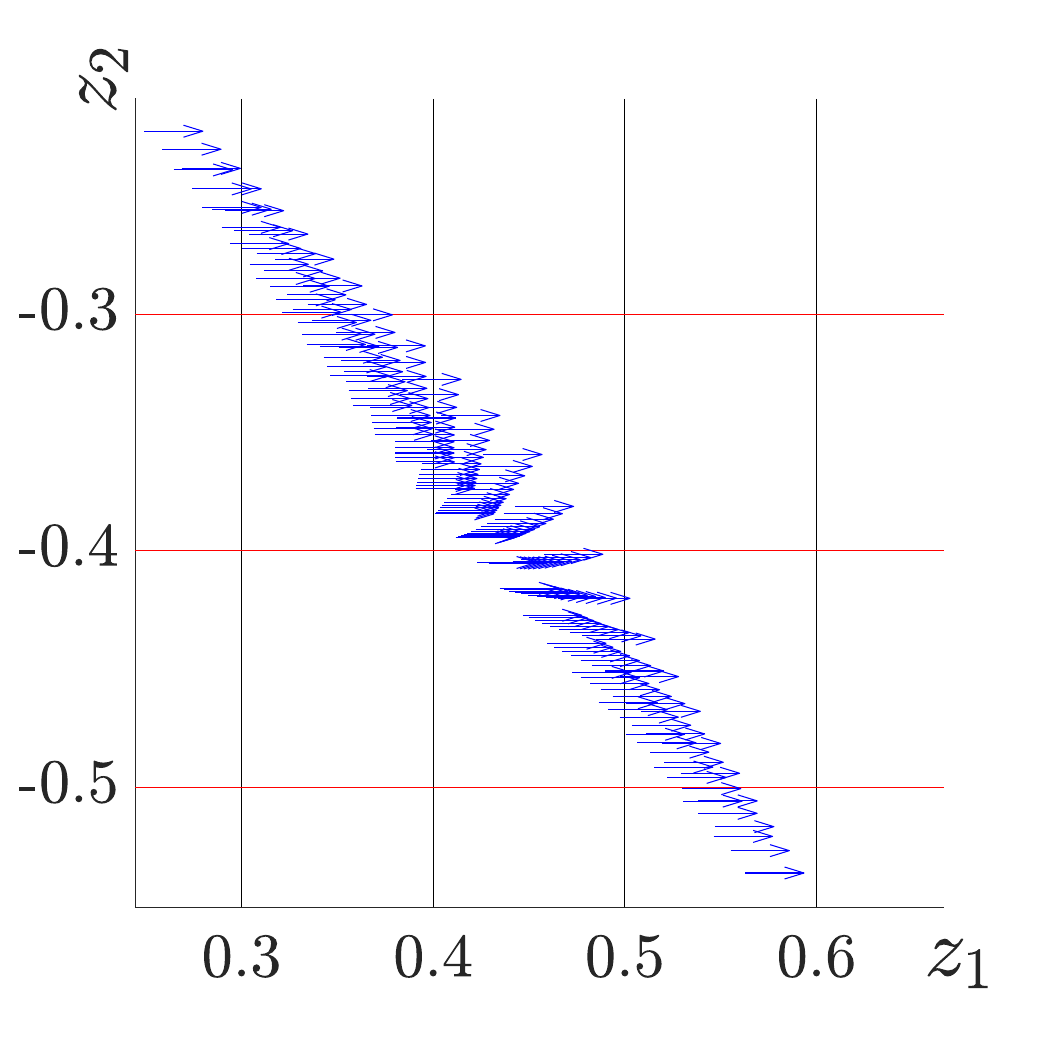}
        \label{subfig:NonlinearSysFB}
    \end{subfigure}\\
    \caption*{Analytic solution of the system, Eq. \eqref{eq:2DnonlinearDyn}, finding flowbox coordinates via a minimal set}
    \begin{subfigure}[t]{0.48\textwidth} 
\includegraphics[width=0.85\textwidth,valign = t]{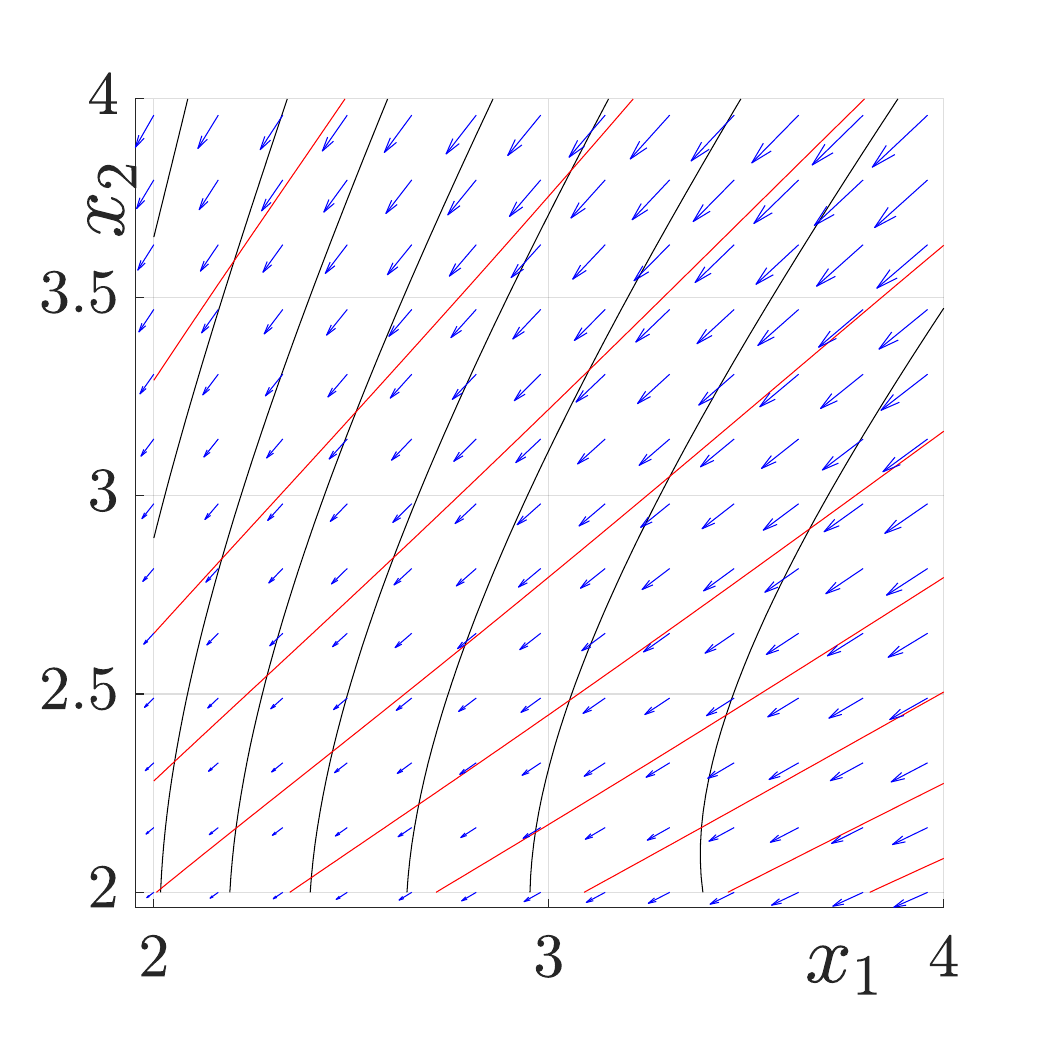}
        
        \label{subfig:NonlinSysFBconNum}
    \end{subfigure}
    \begin{subfigure}[t]{0.48\textwidth} 
\includegraphics[width=0.85\textwidth,valign = t]{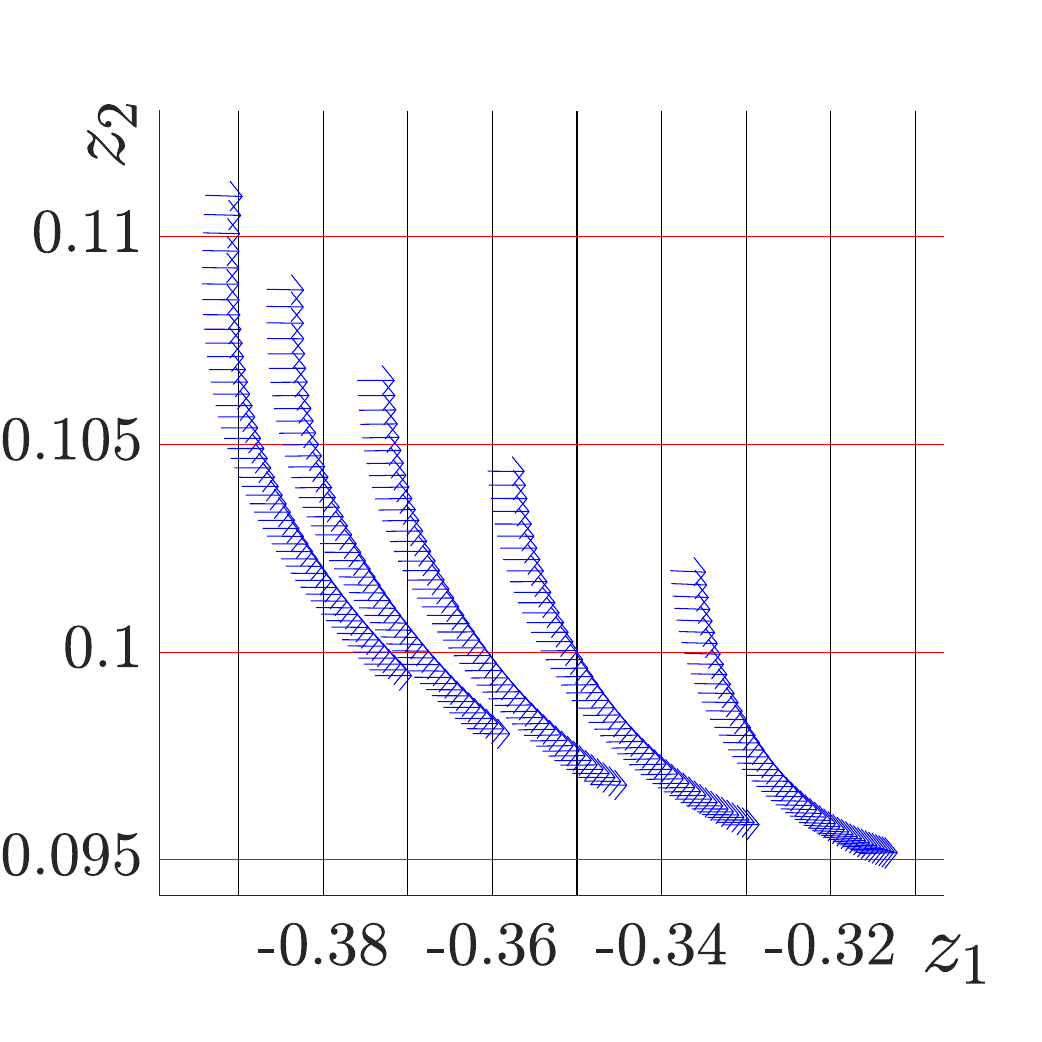}
        \label{subfig:NonlinSysFBNum}
    \end{subfigure}
    \caption*{Numeric generation of a minimal set and the corresponding flowbox coordinates}
    \caption{Minimal set and flowboxing of a nonlinear two-dimensional system}
    \label{fig:2DNonlin}
\end{figure*}

\section{Discussion and conclusions}
\subsection{Minimal set and Dimensionality reduction}
A minimal set of Koopman eigenfunctions (or unit velocity measurements) can be seen as a change of variables. This process is reversible since the Jacobian is a full-rank matrix. The minimal set is based on the unit manifolds not only at a certain dynamic's orbit but also in its neighborhood. For better understanding, let us reanalyze some of the examples above.

\paragraph{\bf{Limitations of unit velocity measurement numerics} }
The analytic solution of the system $A_R$ (real eigenvalues) starts with finding the eigenvectors and alignment the coordinate system accordingly. Then, axes rescaling lead the system to canonical representation (\cref{def:canonicalSplitDynamic}). Careful looking at that solution reveals an inherent problem when the initial condition is proportional to an eigenvector. In this case, the split dynamic deteriorates to a one-dimensional dynamic system. Sure, there are 2 different time mappings when the initial conditions are lying on an eigenvector. However, they can not be generalized to unit manifolds as it is shown in \cref{appsec:KEF_KM}. 

\begin{figure*}[phtb!]
    \centering
    \captionsetup[subfigure]{justification=centering}
    \begin{subfigure}[t]{0.48\textwidth} 
\includegraphics[width=0.85\textwidth,valign = t]{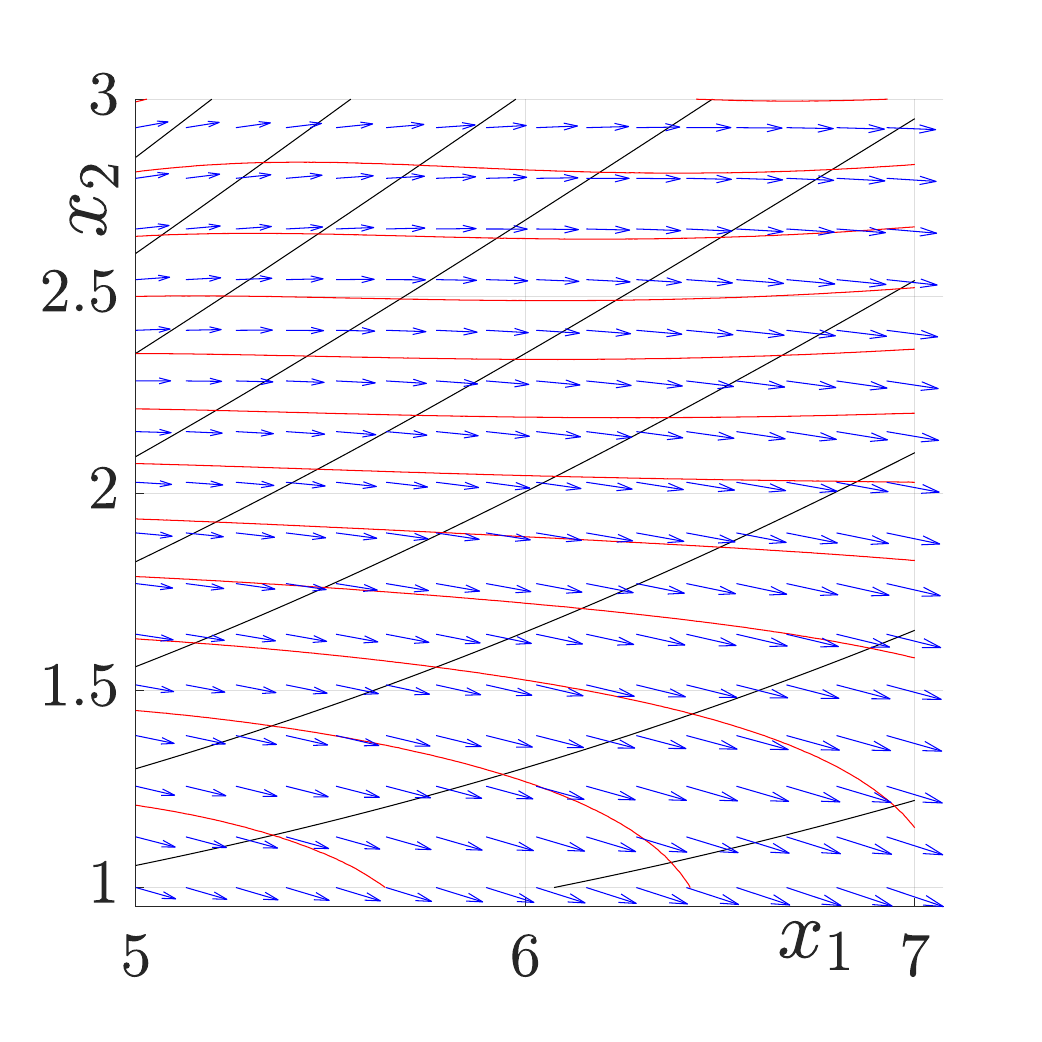}
        \label{subfig:linearSysRealEVFBconNumwithoutFoliationCleanRe2}
    \end{subfigure}
    \begin{subfigure}[t]{0.48\textwidth} 
\includegraphics[width=0.85\textwidth,valign = t]{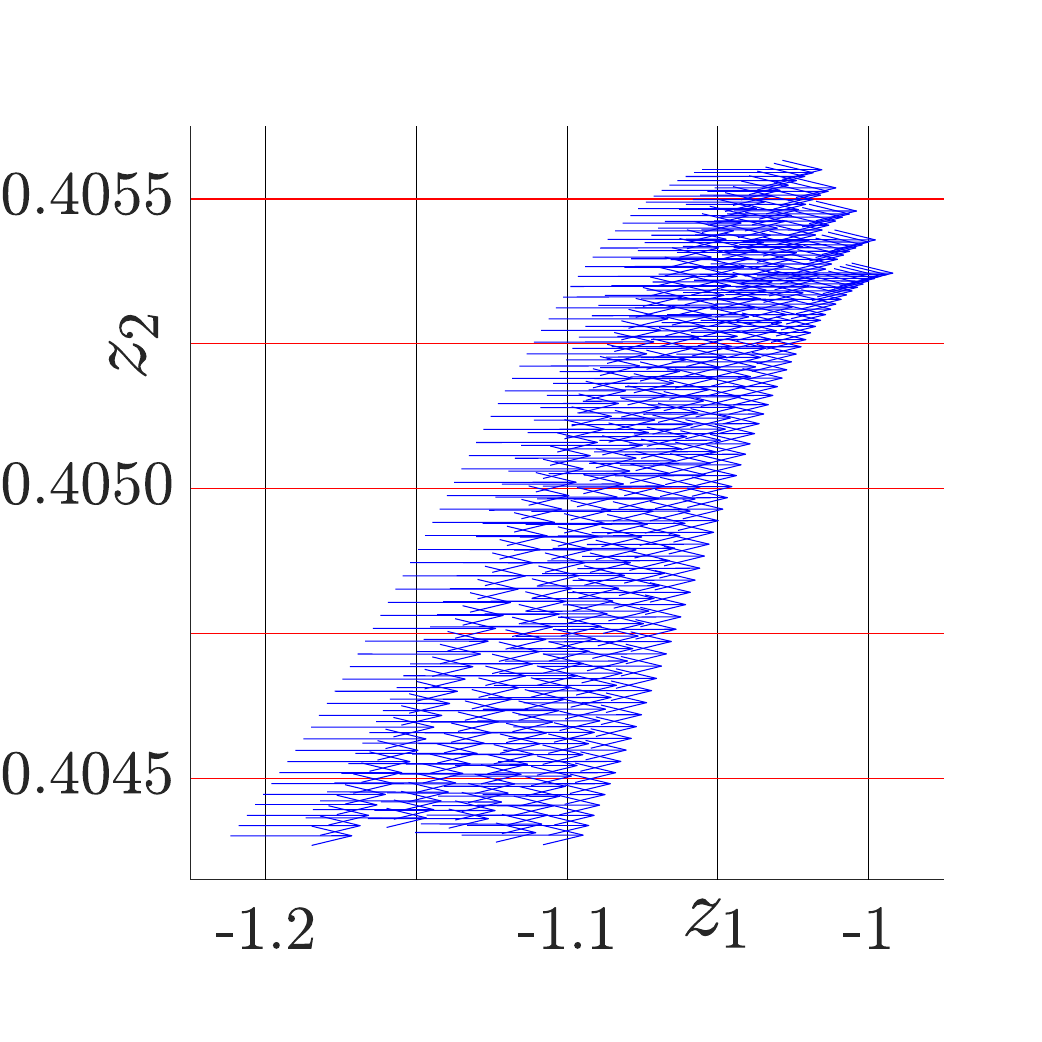}
        \label{subfig:linearSysRealEVFBNumwithoutFoliationCleanRe2}
    \end{subfigure}

    \caption{Examine the numeric solution of a linear system $A_R$ on the patches $[5,7]\times [1,3]$. }
    \label{fig:linearSysRealEVClear2} 
\end{figure*}

\begin{figure*}[phtb!]
    \centering
    \captionsetup[subfigure]{justification=centering}
    \begin{subfigure}[t]{0.48\textwidth} 
\includegraphics[width=0.85\textwidth,valign = t]{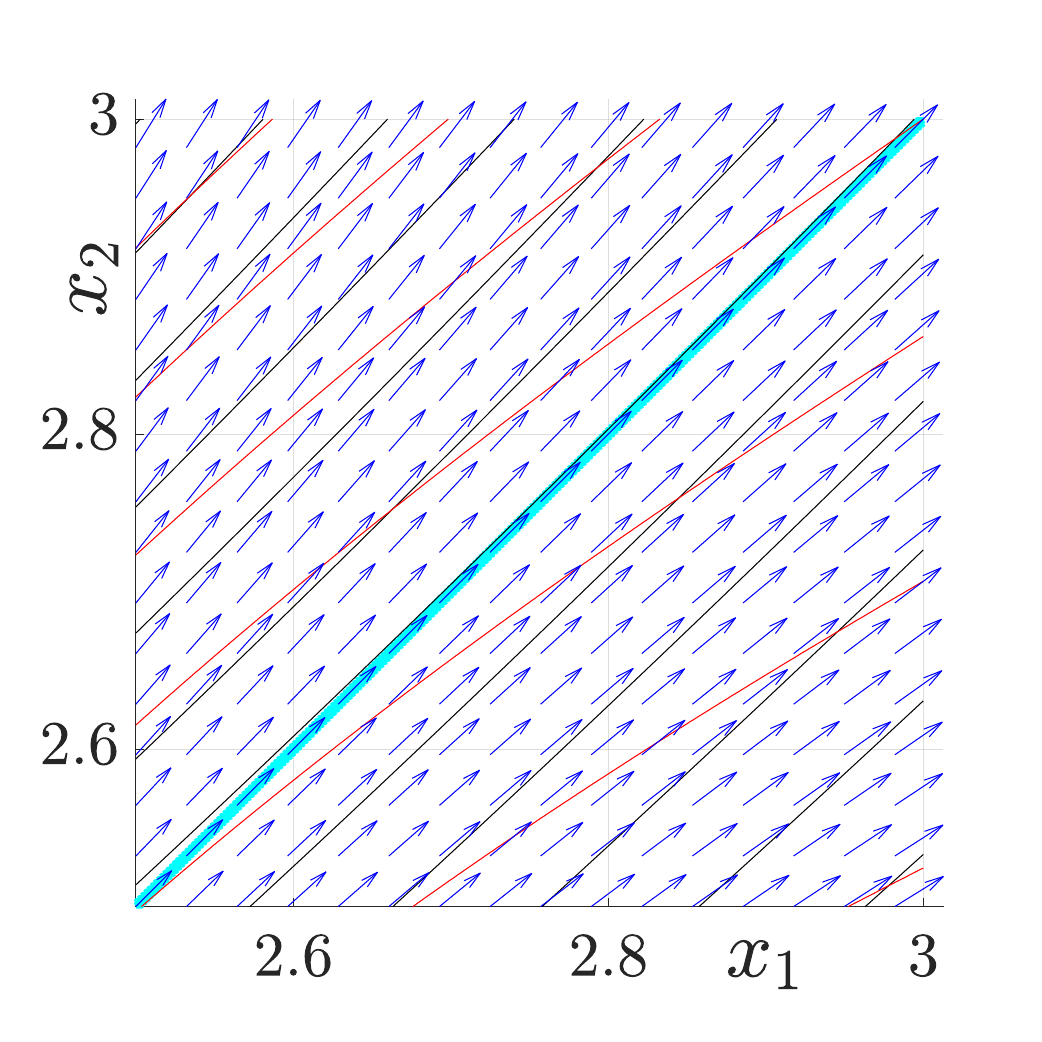}
        \label{subfig:linearSysRealEVFBconNumwithoutFoliation}
    \end{subfigure}
    \begin{subfigure}[t]{0.48\textwidth} 
\includegraphics[width=0.85\textwidth,valign = t]{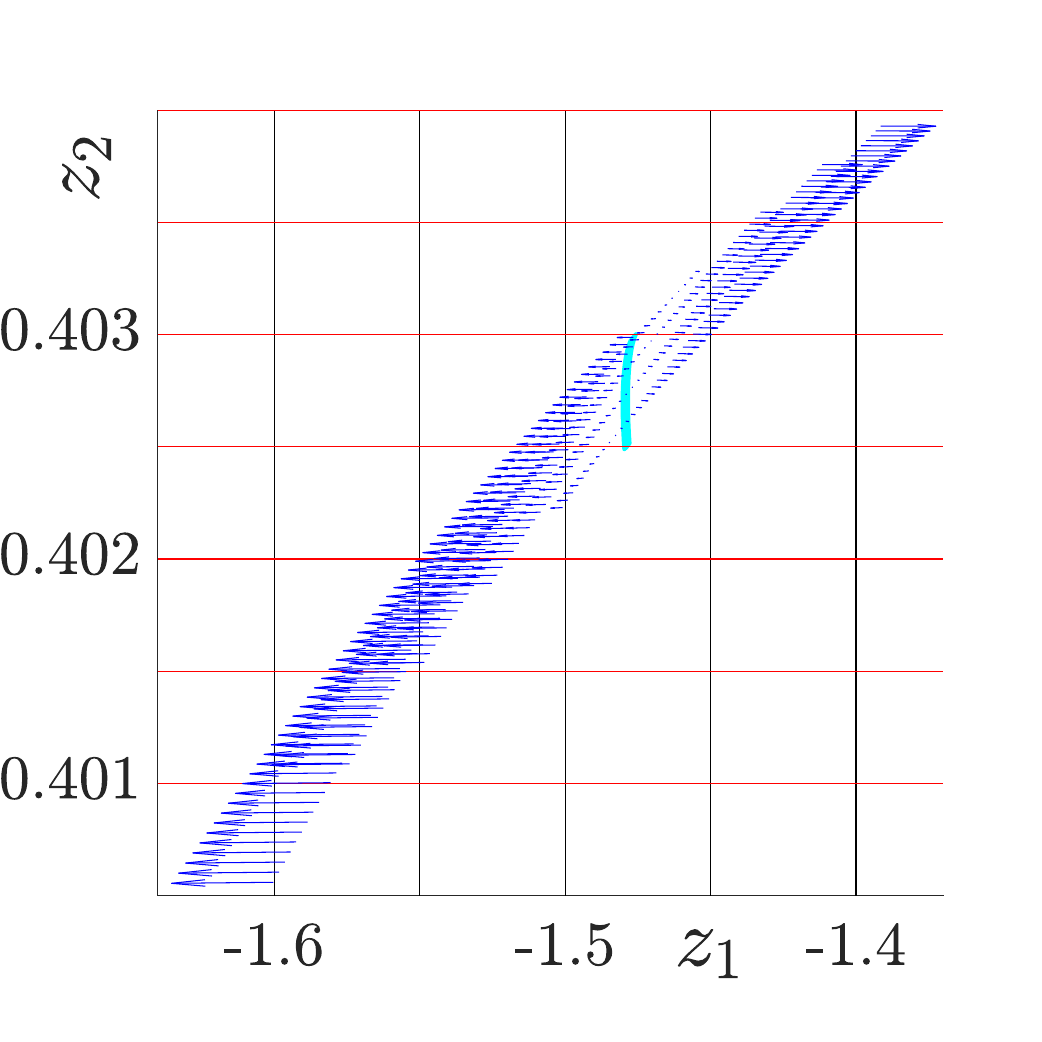}
        \label{subfig:linearSysRealEVFBNumwithoutFoliation}
    \end{subfigure}
    \caption{Examine the numeric solution of a linear system $A_R$ on the patch $[2.5,3]\times[2.5,3]$. The numeric solution where $x_1=x_2$ colored in cyan.}
    \label{fig:2DLinRealwithFoli}
\end{figure*}

In the second row in \cref{fig:2DLinReal}, the flowbox coordinates resulting from a NN are presented. This NN was trained on the patch $[4,6]\times[1,3]$. \cref{fig:linearSysRealEVClear2,fig:2DLinRealwithFoli} respectively depict the results of the NN that is fed with two different patches $[5,7]\times [1,3]$ and $[2.5,3]\times [2.5,3]$. The NN's result generalization on the patch $[5,7]\times [1,3]$ is satisfying. The variance of the error of the flowbox with respect to $z_1$ axis is $1.5594e-04$ and with respect to $z_2$ axis is $2.9356e-06$. On the other hand, as expected, the NN's result of the patch $[2.5,3]\times [2.5,3]$ is far from accurate since one of the solutions of the Koopman PDE is zero. This line and its corresponding curve in the NN result are colored in cyan.

\subsection{Linear system} 
The procedure to find the minimal set of linear systems, split dynamical system, and the flowbox coordinates as presented here can be easily generalized to higher dimensional linear systems. The steps are as follows
\begin{enumerate}
    \item {\bf{Splitting the system}} -- Find axes for which the dynamic is split. Use the eigenvectors of the dynamic to find these axes. Align the system according to these new coordinates which split the system into $N$ independent subsystems.
    \item  {\bf{Time rescaling}} -- Rescale the axes such that for each coordinates the dynamic velocity is $1$.
    \item {\bf{Flowbox}} -- Rotate the canonical split dynamic such that the dynamic velocity at each coordinate is zero but one of them which is $N$. Rescale this axis such that the velocity is one.
\end{enumerate}

\subsection{Anchor distortions}
The minimal set concept is that there is a set of reversible distortions on the coordinate system which turn the system into a linear one, by using the theory of the Koopman operator. In the results of all systems (\cref{fig:2DLinReal,fig:2DLinComplex,fig:2DLinImag,fig:2DNonlin}), one of the flowboxed coordinates is aligned with the vector field (the red curves), in the analytic solutions and in the numeric ones. These red curves are the level set of $z_2$, admitting $\dot{z}_2=0$. Naturally, $z_2$ represents the conservation law in each system. Generally, for $N$ dimensional dynamical system, there are $N-1$ different zero velocity measurements; "different" in the sense of linearly independent gradients. Thus, $N-1$ conservation laws with the first coordinate $z_1$ we get a minimal set of the Koopman eigenfunctions. These conservation laws are the anchor distortions since they have the same level sets. On the other hand, the first coordinate admitting $\dot{z}_1=1$ can induce infinite options of level sets. All of them with velocity of one.

\subsection{Dynamic Recovery from Samples}
The flowboxing of the examples above is based on the given vector fields. However, flowboxing dynamic from samples includes also dynamic recovering. In that case, we face two main problems. The first problem is entailed by the sample density and the second is related to the diversity of the initial condition. Next, we discuss these two potential problems in detail.
 
\paragraph{\bf{Finding the unit velocity measurements from samples}}
Generally, finding a canonical split system is equivalent to finding a full rank Jacobian matrix that deform the coordinate system such that the dynamic velocity is one everywhere. One of the ways to do that is by diffusion maps \cite{coifman2006diffusion} or its variants \cite{singer2008non,peterfreund2020local}. However, this method demands high-density sampled data to assure recovery of the deformation. This high density is not very common in the dynamical system and more often the dynamic is sampled very sparsely in time and in the initial conditions. Thus, the way to overcome this obstacle is to find unit velocity measurements based on time-mapping functions.

\paragraph{\bf{Diversity in initial conditions}}
Let us demonstrate dynamic recovery using vector field or samples with an example from \cite{brunton2016koopman}. Given the following nonlinear dynamical system 
\begin{equation}
    \begin{bmatrix}
        \dot{x}_1\\
        \dot{x}_2
    \end{bmatrix} = \begin{bmatrix}
        \mu x_1\\
        \lambda(x_2-x_1^2)
    \end{bmatrix}.
\end{equation}
As shown above and noted in \cite{brunton2016koopman}, there are two different solutions of the Koopman PDE,
\begin{equation}
    \begin{split}
        \Phi_1(\bm{x}) =& x_1\\
        \Phi_2(\bm{x}) =& x_2- \frac{\lambda}{\lambda-2\mu}x_1^2
    \end{split}.
\end{equation}
The suggested linearized system is given by the substitute $y_1=x_1,\, y_2 = x_2, \, y_3 = x_1^2$
\begin{equation}
    \begin{bmatrix}
        \dot{y}_1\\ \dot{y}_2\\ \dot{y}_3
    \end{bmatrix} = \begin{bmatrix}
        \mu& 0&0\\
        0 & \lambda&-\lambda\\
        0&0&2\mu
    \end{bmatrix}\begin{bmatrix}
        y_1\\ y_2 \\ y_3
    \end{bmatrix}.
\end{equation}

Now, suppose this linearized system is sampled and time mapping are approximated with NN. The next step is to approximate the Jacobian matrix to recover the dynamic (see for example system recovery in \cite{cohen_gilboa_2023}). However, the Jacobian is a $3\times 3$ matrix, and its rank is $2$ at best because $y_1$ and $y_3$ are dependent. Therefore, generating more and more measurements does not necessarily help in system recovery.

From this simple example, one can draw the following. System recovery from samples holds the possibility of dimensionality reduction since the sample can lie on a low-dimensional manifold in the problem domain. In that case, one can formulate the dynamic more concisely. The case discussed here is equivalent to flowboxing the linear system $A_I$ from samples when the samples are only from the real field or flowboxing the linear system $A_R$ when the samples lie on an eigenvector. In all these examples, the dynamic can be formulated as a lower dimensional than the original. Therefore, one can see here the immediate relation between dimensionality and the necessary richness in the samples.

\section*{List of Acronyms}
\begin{acronym}
\acro{KEF}[KEF]{\emph{Koopman Eigenfunction}}
\acro{PDE}[PDE]{\emph{Partial Differential Equation}}

\end{acronym}
\appendix
\section{Koopman Eigenfunction vs Koopman PDE's solution}\label{appsec:KEF_KM}
Let us consider the following dynamical system
\begin{equation}
    \begin{split}
        \dot{x}_1 &= x_1\\
        \dot{x}_2 & = -x_2 + x_1^2
    \end{split}.
\end{equation}
Given the initial condition $\bm{x}_0=\begin{bmatrix}
    x_{10}&x_{20}
\end{bmatrix}^T$, the solution is 
\begin{equation}
    \begin{split}
        x_1(t) &=x_{10}e^t\\
        x_2(t) &=\left(x_{20}-\frac{x_{10}^2}{3}\right)e^{-t}+\frac{x_{10}^2}{3}e^{2t}
    \end{split}.
\end{equation}
The solution when $\bm{x}_0=\begin{bmatrix}
    1&\dfrac{1}{3}
\end{bmatrix}^T$ is
\begin{equation}
    \begin{split}
        x_1(t) &=e^t\\
        x_2(t) &=\frac{1}{3}e^{2t}
    \end{split}.
\end{equation}
Obviously, $\varphi_1({\bm{x}})=x_1$ and $\varphi_2({\bm{x}})=x_2$ are \acp{KEF} where $\lambda_1=1,\, \lambda_2=2$. Even though $\varphi_2({\bm{x}})=x_2$ is continuous and smooth it does not admit Eq. \eqref{eq:KEFPDE}
\begin{equation}
    \begin{split}
        \nabla \varphi_2(\bm{x})^TP(\bm{x})&=\lambda_2 \varphi_2(\bm{x})\\
        \begin{bmatrix}
            0&1
        \end{bmatrix}\begin{bmatrix}
            x_1\\ -x_2 + x_1^2
        \end{bmatrix}&=-x_2 + x_1^2\ne 2x_2
    \end{split}
\end{equation}
unless $\bm{x}\in \mathcal{X}(1,1/3)$.

\bibliography{smartPeople}

\end{document}